\newcommand{\reals}{\mathbb{R}}
\newcommand{\norm}[1]{\|#1\|}
\newcommand{\lemref}[1]{Lemma~\ref{#1}}
\renewcommand{\eqref}[1]{Eq.~(\ref{#1})}
\newtheorem{theorem}{Theorem}
\newtheorem{lemma}[theorem]{Lemma}
\newtheorem{assumption}{Assumption}
\newtheorem{proposition}{Proposition}
\newtheorem*{remark}{Remark}
\title{High-Order Oracle Complexity \\of Smooth and Strongly Convex Optimization}
\author{Guy Kornowski \qquad Ohad Shamir\\Weizmann Institute of Science}
\begin{document}
\maketitle

\begin{abstract}
    In this note, we consider the complexity of optimizing a highly smooth (Lipschitz $k$-th order derivative) and strongly convex function, via calls to a $k$-th order oracle which returns the value and first $k$ derivatives of the function at a given point, and where the dimension is unrestricted. Extending the techniques introduced in \citet{arjevani2019oracle}, we prove that the worst-case oracle complexity for any fixed $k$ to optimize the function up to accuracy $\epsilon$ is on the order of $\left(\frac{\mu_k D^{k-1}}{\lambda}\right)^{\frac{2}{3k+1}}+\log\log\left(\frac{1}{\epsilon}\right)$ (in sufficiently high dimension, and up to log factors independent of $\epsilon$), where $\mu_k$ is the Lipschitz constant of the $k$-th derivative, $D$ is the initial distance to the optimum, and $\lambda$ is the strong convexity parameter. 
\end{abstract}

\section{Introduction}

The complexity of optimizing  functions of a given class using iterative methods is a fundamental question in the theory of optimization. A standard way to approach this is via oracle-based complexity (cf. \citet{YudNem83}): Given local access to the function's values and derivatives at various points, how many such points do we need to sequentially query in order to optimize any function of a given class to some target accuracy $\epsilon$? This forms a good model for generic, unstructured optimization problems, where most practical methods are iterative and rely on such local information. 

Classical results on oracle complexity focus mostly on zero-order and first-order oracles, which provide information about the function's values or gradients, and thus capture algorithms which rely on such local information, such as gradient descent and accelerated gradient descent \citep{YudNem83,nemirovski2005efficient,nesterov2018lectures}. However, there has been much progress in recent years in understanding methods which rely on Hessians and higher-order derivatives, both in terms of new methods and complexity upper bounds (e.g., \citet{nesterov2006cubic,nesterov2008accelerating,baes2009estimate,bubeck2019near,jiang2019optimal,gasnikov2019near,kamzolov2020near,nesterov2020inexact,nesterov2020superfast}), as well as complexity lower bounds \citep{arjevani2019oracle,agarwal2018lower}.

In this note, we focus on a particularly well-behaved class of functions on $\reals^d$: Those which are both highly-smooth (with a globally $\mu_k$-Lipschitz $k$-th order derivative, for some fixed $k\geq2,\mu_k$), and strongly convex with parameter $\lambda$\footnote{In our case of twice differentiable functions, this can be defined as $\nabla^2f(x)\succeq\lambda I$ for all $x\in\reals^d$.}. Generalizing and extending the techniques introduced in \citep{arjevani2019oracle}, we formally prove an oracle complexity lower bound of 
\[
\Omega\left(\left(\frac{\mu_k D^{k-1}}{\lambda}\right)^{\frac{2}{3k+1}}+\log\log\left(\frac{1}{\epsilon}\right)\right)
\]
for any sufficiently small $\epsilon$, where $D$ is the initial distance from the global optimum. This lower bound holds for any deterministic algorithm\footnote{This captures standard algorithms for this setting, and is mostly for simplicity. Indeed, the proof can be extended to any randomized algorithm using, for example, the techniques of \cite{woodworth2017lower}, which performed a similar extension for first-order oracles, at the cost of a considerably more involved proof.}. Moreover, under a mild assumption on the size of the derivatives at the optimum, we show that the lower bound above can be attained (up to logarithmic factors independent of $\epsilon$), using a combination of the accelerated Taylor descent algorithm of \citet{bubeck2019near} and the cubic regularized Newton algorithm from \citet{nesterov2008accelerating}. 

The results generalize those of \citet{arjevani2019oracle} (which in the strongly convex case considered only second-order oracles), and provide qualitatively similar conclusions. For example, the bound implies that even though the convergence order is eventually quadratic (as captured by the $\log\log(1/\epsilon)$ term, growing extremely slowly in $\epsilon$), the optimization complexity is largely influenced by geometry-dependent factors such as the initial distance $D$ and the Lipschitz/strong convexity parameters $\mu_k$ and $\lambda$ (although the latter two are attenuated as $k$ increases, due to the $\frac{2}{3k+1}$ power term). Moreover, the bound implies that higher-order methods (with any $k>1$ and assuming only Lipschitzness of the $k$-th order derivatives) must have a polynomial dependence on $D$, in sharp contrast to first-order methods ($k=1$) which can actually attain logarithmic dependence on $D$, assuming the gradient is Lipschitz. 

It is important to emphasize that while the upper bound holds independently of the dimension, the lower bound is high-dimensional in nature. Namely,
\[
d=\Omega(T)=\Omega\left(\left(\frac{\mu_k D^{k-1}}{\lambda}\right)^{\frac{2}{3k+1}}+\log\log\left(\frac{1}{\epsilon}\right)\right)~.
\]
Not only is this property common practice in the optimization lower bounds literature \citep{YudNem83, nesterov2018lectures}, it is crucial. Indeed, given an initial upper bound on the distance to the optimum, a simple combination of the center of gravity method \citep{bubeck2014convex} and cubic regularized Newton yields an algorithm which requires only $\widetilde{\mathcal{O}}(d+\log\log(1/\epsilon))$ steps. Thus, if $d=o((\frac{\mu_k D^{k-1}}{\lambda})^{\frac{2}{3k+1}})$ our lower bound cannot possibly hold. We address this regime and complement our results by proving for any $d=\mathcal{O}((\frac{\mu_k D^{k-1}}{\lambda})^{\frac{2}{3k+1}})$ an oracle complexity lower bound of $\Omega(d)$. Overall this establishes tight complexity bounds both in the high and low dimensional regimes, up to logarithmic factors.

\section{Main results}

Consider a high-order oracle, which given a point $x\in\mathbb{R}^d$ returns some function’s value and all of its derivatives up to order $k$ evaluated at the point: $f(x),\nabla f(x),\nabla^{2}f(x),...,\nabla^{k}f(x)$. Given access to such an oracle, an algorithm produces a sequence of points $x^{1},x^{2},...,x^{T}$, with each $x^{t}$ being some deterministic function of the oracle’s responses at $x^{1},x^{2},...,x^{t-1}$. The algorithm's goal is to approximate the function's global minimum $x^*$. That is, after some $T$ queries, produce $x^T$ such that $f(x^{T})-f(x^{*})\leq\epsilon$. We will only consider objective functions $f$ which come from the relatively well behaved class of $\lambda$-strongly convex, $k\geq2$ times differentiable functions with a $\mu_{k}$-Lipschitz $k$-th order derivative (with respect to the tensor operator norm).

To state our lower bound, we will make two weak assumptions, which  essentially ensure that all terms in the bound are at least some positive constant, and that we take logarithms of positive quantities:


\begin{assumption} \label{lower bound assumptions}
\begin{enumerate}
    \item(We do not start off too lucky)\\
$
D
>
\max\left\{\sqrt{3}\left(\frac{k!2^{\frac{k+3}{2}}\lambda}{\mu_{k}}\right)^{\frac{1}{k-1}},\sqrt{3}(12)^{\frac{3k+1}{2(k+1)}}\left(\frac{k!2^{\frac{k+3}{2}}\lambda}{\mu_{k}}\right)^{\frac{5}{4(2k-1)}}\right\}
$
\item(Our standards are not too low)
$
\epsilon
<
\min\left\{\left(\frac{(4\cdot k!)^{2}\lambda^{k+1}}{\mu_{k}^{2}}\right)^{\frac{1}{k-1}},
\frac{\lambda}{8}\left(\frac{k!2^{\frac{k+3}{2}}\lambda}{\mu_{k}}\right)^{\frac{2}{k-1}}\right\}
$
\end{enumerate}
\end{assumption}

\begin{theorem} \label{lower bound main result}
For any $k\geq2$ and positive $\lambda,\mu_{k},D,\epsilon$ that satisfy Assumption \ref{lower bound assumptions},
and an algorithm based on a $k$-th order oracle as described above, there exists a function $f:\mathbb{R}^{d}\rightarrow\mathbb{R}$ such that:

\begin{itemize}
    \item 
    $f$ is $\lambda$-strongly convex, $k$ times differentiable with $\mu_{k}$-Lipschitz $k$-th order derivative and has a global minimum $x^{*}$ satisfying $\|x^{1}-x^{*}\|\leq D$.
    \item
    The index $T$ required to ensure $f(x^{T})-f(x^{*})\leq\epsilon$ is at least
    \begin{equation*}
    c\cdot\left(\left(\frac{\mu_{k}D^{k-1}}{3^{\frac{k-1}{2}}2^{\frac{k+3}{2}}k!\lambda}\right)^{\frac{2}{3k+1}}+\log_{k}\log\left(\left(\frac{(4\cdot k!)^{2}\lambda^{k+1}}{\mu_{k}^{2}}\right)^{\frac{1}{k-1}}\cdot\frac{1}{\epsilon}\right)
    \right)
    \end{equation*}
    for some absolute constant $c>0$. Consequently,
    \begin{equation*}
    T \geq c_{k}\left(\left(\frac{\mu_{k}D^{k-1}}{\lambda}\right)^{\frac{2}{3k+1}}+\log\log\left({\frac{\lambda^{\frac{k+1}{k-1}}}{\mu_{k}^{\frac{2}{k-1}}}}\cdot\frac{1}{\epsilon}\right)\right)
    \end{equation*}
    for some constant $c_k>0$ which depends only on $k$.
\end{itemize}
\end{theorem}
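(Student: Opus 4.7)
The plan is to prove two separate lower bounds of $\Omega\bigl((\mu_{k}D^{k-1}/\lambda)^{2/(3k+1)}\bigr)$ and $\Omega(\log\log(1/\epsilon))$ and combine them using $T\geq\max\{a,b\}\geq(a+b)/2$. Both rely on the resisting-oracle paradigm of Nemirovski-Yudin: build a worst-case $f$ on $\reals^{d}$ and exploit orthogonal invariance of the function class to reduce to the case where the algorithm's $t$-th iterate is supported on the first $t$ standard basis vectors, so that each query reveals information about only one new coordinate. The new ingredient relative to the first-order zero-chain of Nemirovski-Yudin and the second-order zero-chain of Arjevani-Shamir is that the ``chain'' must now be immune to the entire $k$-th order derivative tensor at each query, not just to a gradient or a Hessian.

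For the polynomial term I would construct a chain function of the form
\[
f(x)\;=\;\alpha\sum_{i=1}^{T-1}\sigma(x_{i+1}-x_{i})\;-\;\beta\langle e_{1},x\rangle\;+\;\frac{\lambda}{2}\|x\|^{2},
\]
where $\sigma:\reals\to\reals$ is a scalar gadget whose first $k$ derivatives all vanish at the origin, ensuring that at any iterate of the form $(x_{1},\dots,x_{i},0,\dots,0)$ the $k$-th order oracle response depends only on $x_{1},\dots,x_{i+1}$ and not on any further coordinate. Concretely one can take $\sigma$ to be a smoothed version of $|y|^{k+1}$, rescaled so that its $k$-th derivative has Lipschitz constant matching $\mu_{k}/\alpha$ on the relevant domain. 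After verifying that $f$ is $\lambda$-strongly convex and that its unique minimizer $x^{*}$ has coordinates decaying geometrically with $\|x^{*}\|\leq D$, one lower-bounds $f(x^{t})-f(x^{*})$ by the contribution of the untouched tail coordinates. The free parameters $\alpha,\beta,T$ are then tuned subject to three coupled constraints---bounded initial distance $D$, prescribed Lipschitz constant $\mu_{k}$, and $\lambda$-strong convexity---and their balance point is precisely what produces the exponent $2/(3k+1)$. Assumption~\ref{lower bound assumptions}(1) is exactly what makes this tuning feasible.

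For the $\log\log(1/\epsilon)$ term I would use a low-dimensional construction that exploits the limited information a single $k$-th order query can extract about the location of the minimizer of a highly smooth strongly convex function. The idea is to exhibit a parametric family $\{f_{a}\}$ of $\lambda$-strongly convex, $\mu_{k}$-Lipschitz-$k$-th-derivative functions minimized at $x^{*}=a$, and an adversarial oracle that at each round picks $a$ consistently with all past responses; since the oracle only reveals $f_a$'s $k$-th order Taylor expansion at the query and the class constraint bounds the $(k+1)$-th order deviation by $\mu_{k}$, the uncertainty radius $r_{t}$ about the minimizer can shrink by at most a $(k+1)$-order-power factor per query, yielding a recursion of the form $r_{t+1}\gtrsim r_{t}^{(k+1)/2}$ (or a similar super-linear relation). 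Iterating this recursion and using $f(x^{T})-f(x^{*})\gtrsim\lambda r_{T}^{2}$ gives the $\Omega(\log\log(1/\epsilon))$ bound in the regime guaranteed by Assumption~\ref{lower bound assumptions}(2).

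The main obstacle is the design of the gadget $\sigma$ in the first construction: it must be smooth with its first $k$ derivatives vanishing at $0$ (generalizing the Arjevani-Shamir second-order gadget to all orders), have $k$-th derivative with exactly the prescribed Lipschitz constant, and present enough curvature in each link that the chain's minimizer cannot be well approximated without activating the next coordinate. Tracking the constants through the balance of the three constraints above to land on exactly $2/(3k+1)$---rather than some other function of $k$---while simultaneously verifying the zero-chain property for the \emph{full} $k$-th order tensor (including all mixed partial derivatives) is where the bulk of the technical work lies, and is what pins down the somewhat intricate form of Assumption~\ref{lower bound assumptions}(1).
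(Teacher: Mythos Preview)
Your chain construction for the polynomial term is essentially the paper's. One simplification: no smoothing of $|y|^{k+1}$ is needed, since $g(y)=|y|^{k+1}/(k+1)$ already satisfies $g^{(j)}(0)=0$ for all $1\le j\le k$, so the raw chain is a $k$-th-order zero-chain as written.

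The significant divergence is in how the $\log\log(1/\epsilon)$ term is obtained. The paper extracts it from the \emph{same} chain construction, not a separate low-dimensional argument. The point is that the minimizer $\widetilde{x}^{*}$ of the chain does not decay geometrically as you assert; its coordinates exhibit two regimes. For small $t$ the decay is essentially linear, $\widetilde{x}^{*}_{t}\gtrsim c\,\gamma^{(k+1)/(2k)}\widetilde\lambda^{-1/2}-(t-\tfrac12)\gamma^{1/k}$, and this is what, after optimizing $\gamma$ under the constraint $\|x^{*}\|\le D$, produces the $(\mu_{k}D^{k-1}/\lambda)^{2/(3k+1)}$ bound. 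But past some index $t_{0}\le\widetilde T/2$ the first-order optimality conditions yield a recursion $\widetilde{x}^{*}_{t+1}\ge\widetilde\lambda^{-1}(\widetilde{x}^{*}_{t}/3)^{k}$, so the tail coordinates are bounded below doubly-exponentially: $\widetilde{x}^{*}_{t_{0}+j}\ge\widetilde\lambda^{1/(k-1)}\,6^{-k^{j+1}}$. Since after $T$ queries the iterate is orthogonal to $v_{t_{0}+T}$, strong convexity gives $f(x^{T})-f(x^{*})\ge\tfrac{\lambda}{2}\langle v_{t_{0}+T},x^{*}\rangle^{2}\ge\tfrac{\lambda}{2}\widetilde\lambda^{2/(k-1)}6^{-2k^{T+1}}$, and inverting this is exactly the $\log_{k}\log(1/\epsilon)$ term. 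The $\log\log$ bound is already hidden in the tail of the minimizer of the function you built; you just need to analyze that tail rather than invoke a second construction.

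Your alternative information-theoretic route for the $\log\log$ term has a genuine gap: you assert a recursion $r_{t+1}\gtrsim r_{t}^{(k+1)/2}$ without exhibiting a family $\{f_{a}\}$ for which the $k$-th order oracle responses at each query are simultaneously consistent across the surviving parameter set while every $f_{a}$ remains $\lambda$-strongly convex with $\mu_{k}$-Lipschitz $k$-th derivative. Designing such a family is itself the hard part (naive quadratic families reveal the minimizer in one gradient), and the exponent would in any case need to be $k$ rather than $(k+1)/2$ to produce the $\log_{k}$ appearing in the statement. The paper's single-construction approach sidesteps this entirely.
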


In the same setting as above, denote $M:=\max\left\{ \|\nabla^{2}f(x^{*})\|,...,\|\nabla^{k}f(x^{*})\|\right\}$ (under the operator norm).
Once again, for technical reasons we will make two weak assumptions:
\begin{assumption} \label{upper bound assumption}
\begin{enumerate}
\item(We do not start off too lucky) $D>2$
\item(Our standards are not too low) $\epsilon<\frac{\lambda^3}{2eM^2}$ 
\end{enumerate}
\end{assumption}
The following theorem states that the lower bound is essentially tight up to a logarithmic factor.

\begin{theorem} \label{upper bound main result}
Let $f:\mathbb{R}^{d}\to\mathbb{R}$ be a function which satisfies the same assumptions as in Theorem \ref{lower bound main result}. Assume the parameters satisfy Assumption \ref{upper bound assumption} (instead of \ref{lower bound assumptions}). Then there exists a deterministic algorithm that utilizes a $k$-th order oracle of $f$ and produces a sequence $x^{1},x^{2},\dots$ such that $f(x^{T})-f(x^{*})<\epsilon$ where
\begin{equation*}
    T=C_{k}\cdot\widetilde{\mathcal{O}}\left(\left(\frac{\mu_{k}D^{k-1}}{\lambda}\right)^{\frac{2}{3k+1}}+\log\log\left(\frac{\lambda^{3}}{M^{2}\epsilon}\right)\right)
\end{equation*}
for some constant $C_{k}>0$ which depends only on $k$.
\end{theorem}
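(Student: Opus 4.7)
The plan is to construct a two-phase algorithm that first uses accelerated tensor methods to reach a neighborhood of the optimum where high-order Newton-type iterations exhibit quadratic (or faster) local convergence, and then switches to such a locally fast method to drive the suboptimality below $\epsilon$. The point is that the Lipschitz constant $\mu_{k}$ governs the long-range behavior and dictates the polynomial term, while $M$ governs the local behavior near $x^{*}$ and dictates the $\log\log$ term.

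For the first phase I would invoke the accelerated Taylor descent algorithm of Bubeck et al.\ (2019). For a convex function with $\mu_{k}$-Lipschitz $k$-th derivative, this algorithm achieves $f(x_{t})-f(x^{*})=O(\mu_{k}R^{k+1}/t^{(3k+1)/2})$, where $R$ is the initial distance to $x^{*}$. To exploit $\lambda$-strong convexity, I would wrap this in a standard restart scheme: whenever the certified suboptimality has halved, restart the algorithm from the current iterate, using the bound $\norm{x-x^{*}}^{2}\leq (2/\lambda)(f(x)-f(x^{*}))$ to track the effective radius. Between two halvings the inner cost is $O((\mu_{k}R^{k-1}/\lambda)^{2/(3k+1)})$, and since $R$ shrinks by a factor $\sqrt{2}$ per halving, summing the resulting geometric series and paying the number of restarts yields total cost $\widetilde{O}((\mu_{k}D^{k-1}/\lambda)^{2/(3k+1)})$ to reach any fixed target suboptimality $\epsilon_{0}$ that depends only on $\lambda,M$.

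For the second phase I would pick $\epsilon_{0}=\Theta(\lambda^{3}/M^{2})$, which by strong convexity forces $\norm{x-x^{*}}=O(\lambda/M)$. This is precisely the region of quadratic convergence for Nesterov's cubic regularized Newton method (Nesterov 2008), where the recursion takes the form $\norm{x_{t+1}-x^{*}}/r_{0}\leq c(\norm{x_{t}-x^{*}}/r_{0})^{2}$ with $r_{0}=\Theta(\lambda/M)$, governed by $M$ rather than $\mu_{k}$. Iterating this bound and translating back through $f(x_{T})-f(x^{*})\leq (M/2)\norm{x_{T}-x^{*}}^{2}$ shows that $O(\log\log(\lambda^{3}/(M^{2}\epsilon)))$ iterations suffice to reach accuracy $\epsilon$. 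Assumption \ref{upper bound assumption}(2) is exactly what guarantees that the target $\epsilon$ lies inside this quadratic convergence regime so that the $\log\log$ expression is well-defined and positive, while Assumption \ref{upper bound assumption}(1) absorbs absolute constants in the first phase.

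The main obstacle I expect is justifying the handoff between the two phases: one has to verify that the iterate returned by the accelerated Taylor descent restart scheme does indeed enter the basin of quadratic convergence of cubic Newton, which requires a careful separation of the global curvature constant $\mu_{k}$ (controlling the first phase's rate) from the local Hessian scale $M$ (controlling the contraction radius of the second phase). Once this is done, summing the two costs yields the claimed bound $C_{k}\cdot\widetilde{\mathcal{O}}\bigl((\mu_{k}D^{k-1}/\lambda)^{2/(3k+1)}+\log\log(\lambda^{3}/(M^{2}\epsilon))\bigr)$, with the $k$-dependent constant $C_{k}$ absorbing the accelerated-tensor step cost, the restart overhead, and the base of the $\log\log$ (which for a higher-order local method would improve from $2$ to $k$ but is hidden in the $\widetilde{\mathcal{O}}$).
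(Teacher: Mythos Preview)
Your proposal is correct and follows essentially the same two-phase approach as the paper: restarted accelerated Taylor descent to reach a neighborhood of $x^{*}$, then cubic regularized Newton for the doubly-logarithmic finish. The obstacle you correctly flag at the end is precisely the paper's key technical step: since no global Hessian Lipschitz constant is assumed, one bounds a \emph{local} constant $\widetilde\mu_2(r)\le 2M$ by iterated Taylor expansion of $\nabla^2 f,\ldots,\nabla^k f$ around $x^{*}$, valid once $r<\min\{1,(M/\mu_k)^{1/(k-2)}\}$; first running ATD down to this $\mu_k$-dependent radius (not merely to $O(\lambda/M)$) is what generates the extra logarithmic factor hidden in the $\widetilde{\mathcal O}$.
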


\begin{remark}
More specifically, the $\widetilde{\mathcal{O}}(\cdot)$ hides the fact that the first summand is multiplied by
$$\log\left(\frac{M^2D^2}{\lambda}\cdot\left(\frac{\mu_{k}}{M}\right)^{\frac{2}{k-2}}\right)$$ Interestingly, note that $M$ appears in the bound only inside logarithmic factors. 
\end{remark}

Lastly, we consider the low dimensional regime. As before, we make the following assumptions:
\begin{assumption} \label{assumption: low dimension}
\begin{enumerate}
    \item(We do not start off too lucky) $D>\sqrt{\frac{\mu_{k}}{k!2^{\frac{k+3}{2}}\lambda}+\sqrt{2}\left(\frac{\mu_{k}}{k!2^{\frac{k+3}{2}}\lambda}\right)^{3/2}}$
    \item(Our standards are not too low) $\epsilon<\frac{\lambda}{32}$
    \item(Low dimensional regime) $d\leq\left(\frac{\mu_k D^{k-1}}{k!2^{\frac{k+3}{2}}\lambda}\right)^{\frac{2}{3k+1}}$
\end{enumerate}
\end{assumption}
\begin{theorem} \label{thm: low dim lower bound}
For any $k\geq2$, positive $\lambda,\mu_{k},D,\epsilon$ and $d\in\mathbb{N}$ that satisfy Assumption \ref{assumption: low dimension},
and an algorithm based on a $k$-th order oracle as described above, there exists a function $f:\mathbb{R}^{d}\rightarrow\mathbb{R}$ such that:
\begin{itemize}
    \item 
    $f$ is $\lambda$-strongly convex, $k$ times differentiable with $\mu_{k}$-Lipschitz $k$-th order derivative and has a global minimum $x^{*}$ satisfying $\|x^{1}-x^{*}\|\leq D$.
    \item
    The index $T$ required to ensure $f(x^{T})-f(x^{*})\leq\epsilon$ is at least $c\cdot{d}$, for some absolute constant $c>0$.
\end{itemize}
\end{theorem}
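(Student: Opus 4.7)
The plan is to recycle the hard function construction used to prove Theorem~\ref{lower bound main result}, but to instantiate it directly in dimension $d$ instead of in the (larger) ambient dimension that the high-dimensional argument demands. The core of that construction is a progress-chain (or ``resistance'') property: after $t$ queries, any deterministic $k$-th order algorithm can be forced, through a suitable adversarial rotation of the function, to have $x^{t}\in\mathrm{span}(e_{1},\dots,e_{t})$, because $f$ together with all its derivatives at points supported on $\mathrm{span}(e_{1},\dots,e_{t})$ depends only on the first $t$ coordinates. Consequently, in dimension $d$ at most $d$ queries can be informative, and this already foreshadows the $\Omega(d)$ bound, provided that fewer than $d$ revealed coordinates still leave $f(x^{t})-f(x^{*})>\epsilon$.

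First I would take the dimension-$d$ restriction of the hard function from the proof of Theorem~\ref{lower bound main result} with the \emph{given} parameters $\lambda,\mu_{k}$, and verify class membership: $\lambda$-strong convexity and the $\mu_{k}$-Lipschitz $k$-th derivative are inherited verbatim from the construction, while $\|x^{1}-x^{*}\|\leq D$ reduces to a geometric-series-type estimate over the $d$ coordinates of $x^{*}$. Assumption~\ref{assumption: low dimension}(1) is tailored precisely so this estimate stays under $D$ for every $d$ meeting clause~(3). Once this is done, the resisting-oracle argument from Theorem~\ref{lower bound main result} transfers unchanged: for each $t<d$, the iterate $x^{t}$ sits in a $t$-dimensional subspace that misses the ``tail'' $e_{t+1},\dots,e_{d}$ of~$x^{*}$.

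Second, I would translate this geometric gap into a suboptimality lower bound. Strong convexity gives $f(x^{t})-f(x^{*})\geq (\lambda/2)\|x^{t}-x^{*}\|^{2}$, and the right-hand side is at least $(\lambda/2)$ times the squared norm of the hidden tail of $x^{*}$. Because the coordinate-wise structure of the hard function contributes a constant (dimension-independent) amount to this squared norm for every hidden coordinate, the residual gap is at least a fixed constant multiple of $\lambda$; with a small amount of rescaling one can arrange it to be at least $\lambda/32$. Combining with Assumption~\ref{assumption: low dimension}(2), $\epsilon<\lambda/32$, this yields $f(x^{t})-f(x^{*})>\epsilon$ for all $t<cd$ and some absolute $c>0$, which is the claim.

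The main obstacle I anticipate is checking that a single construction works uniformly in both the high- and low-dimensional regimes: in the high-dim proof, the hard function is tuned so that after $T^{*}=(\mu_{k}D^{k-1}/\lambda)^{2/(3k+1)}$ queries the residual suboptimality just hits $\epsilon$, whereas here we stop strictly earlier, at $t\approx d<T^{*}$, and must confirm the earlier gap is still above $\epsilon$. Clause~(3) of Assumption~\ref{assumption: low dimension} supplies exactly the inequality $d\lesssim T^{*}$ needed, so this step reduces to a monotonicity statement on the per-iteration optimality gap inherent to the construction of Theorem~\ref{lower bound main result}. The remaining verifications are routine parameter bookkeeping, mirroring the high-dimensional proof.
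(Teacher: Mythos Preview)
Your overall route matches the paper's: restrict the Theorem~\ref{lower bound main result} construction to $\mathbb{R}^{d}$ with $\widetilde{T}=d$, run the resisting-oracle argument, and convert the hidden tail of $x^{*}$ into a suboptimality gap via strong convexity. Two steps in the proposal, however, would not go through as written.

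First, the claim that ``the coordinate-wise structure of the hard function contributes a constant (dimension-independent) amount to this squared norm for every hidden coordinate'' is false. The coordinates $\widetilde{x}^{*}_{t}$ are monotone decreasing (Lemma~\ref{lemma: 4 part minimizer lemma}): there is an initial linear-decay region followed by very rapid decay, so late coordinates are \emph{not} uniformly bounded below. The paper does not sum over the tail; it lower-bounds a \emph{single} early coordinate $\langle v_{T},x^{*}\rangle$ with $T=d/10$, via the low-dimensional analogue of Lemma~\ref{lemma: lower bound on minimizer coordinate decay}, and then invokes Assumption~\ref{assumption: low dimension}(3) precisely to ensure $d/10$ still lies in the linear-decay region where that bound is positive. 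Your ``monotonicity of the per-iteration gap'' heuristic is the right instinct, but the quantitative version needed is this single-coordinate estimate, not a per-coordinate constant.

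Second, you never fix the free parameter $\gamma$, and ``the hard function from the proof of Theorem~\ref{lower bound main result}'' implicitly carries the high-dimensional choice of $\gamma$ tuned to $D$. The paper instead sets $\gamma=1$ in the low-dimensional regime; this is exactly what turns the requirement $\epsilon<\lambda\gamma^{2/k}/32$ into Assumption~\ref{assumption: low dimension}(2), and what makes the bound on $\|x^{*}\|^{2}$ match the specific $D$ threshold in Assumption~\ref{assumption: low dimension}(1). Moreover, with $\gamma=1$ the side hypotheses $\gamma\ge\widetilde{\lambda}^{k/(k-1)}$ and $\widetilde{T}\ge 4\gamma/\widetilde{\lambda}^{k/(k-1)}$ used in Section~\ref{subsec: lower proof 1} need not hold, so Lemmas~\ref{lemma: lower bound on minimizer coordinate decay} and~\ref{lemma: minimizer norm bound} must be re-derived in slightly weaker forms that avoid them. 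Your ``small amount of rescaling'' is therefore doing real work and must be made explicit.
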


The proofs of the theorems appear in the next sections, and are a generalization of the techniques used in \citet{arjevani2019oracle} for the convex and 2nd-order oracle cases. In a nutshell, the lower bound is based on a rotation of a strongly convex function of roughly the form
\[
f(x_1,x_2,\ldots)=\sum_{j=1}^{T-1}|x_i-x_{i+1}|^{k+1}-x_1+\frac{\lambda}{2}\norm{x}^2~.
\]
The structure of the function is inspired by Nesterov's quadratic ``worst function in the world'' for first-order oracles \citet{nesterov2018lectures}. Here, higher-order powers and algorithm-dependent rotations are used to ensure that with a $k$-th order oracle, the algorithm is forced to query only on a certain subspace, whose points are relatively far from the global optimum. The main technical difficulty of the proof is in understanding the structure of the global optimum, and proving a lower bound on its distance from the relevant subspace. 
As to the upper bound in Theorem \ref{upper bound main result}, the proof is algorithmic in nature. We start by using a recent high order optimization algorithm called ``Accelerated Taylor Descent'' (ATD) \citep{bubeck2019near} which was designed and analyzed for highly smooth convex functions (not strongly convex ones). We utilize the facts that in the strongly convex case, decreasing the error necessarily decreasing the distance to the optimum, and that the error bounds for ATD decay with the initial distance. Thus, by repeatedly applying and re-starting ATD, we get a linear convergence towards the optimal point (a similar idea was also utilized in \citet{arjevani2019oracle} for the second-order oracle case). Once this gets us close enough to the optimum, we switch to performing cubic regularized Newton (CRN) steps \citep{nesterov2008accelerating}. Naively we would like to gain the quadratic convergence rate that CRN is proved to achieve for strongly convex functions when initialized close enough to the optimum. The difficulty here lies in the fact that CRN achieves this rate only under the additional assumption that the Hessians are globally Lipschitz, whereas we here we assume that the $k$-th order derivatives (for possible $k>2$) are Lipschitz. To circumvent this, we make the additional mild assumption that the derivatives are bounded at the optimum by $M$, and combine this with the $k$-th order Lipschitz assumption to guarantee sufficiently Lipschitz Hessians in a neighborhood of the optimum.

We prove Theorems \ref{lower bound main result} and \ref{upper bound main result} in the next two sections, respectively. The proof of Theorem \ref{thm: low dim lower bound} is similar to the proof of Theorem  \ref{lower bound main result}, so we defer it to Appendix \ref{Appendix: proof of low dim lower bound}.

\section{Proof of Theorem \ref{lower bound main result}} \label{sec: proof of main lower bound}
The proof is constructed of several parts. In subsection \ref{subsec: lower proof 1} we define a parameterized family of functions and prove their respective minima satisfy certain qualities. Then, in subsection \ref{lower proof 2} we introduce yet another, richer, parameterized family of functions and relate it to the earlier class. Afterwards, in subsection \ref{subsec: lower oracle complexity} we provide an oracle complexity lower bound for the class we constructed. Finally, in subsection \ref{lower proof 4} we choose the remaining parameters such that they fix a function $f$ which proves the theorem.

\subsection{Simplified function} \label{subsec: lower proof 1}
Fix $k,d,\widetilde{T}\in \mathbb{N},\ \gamma,\widetilde{\lambda}>0$. Assume $ d>\widetilde{T}\geq\frac{4\gamma}{\widetilde{\lambda}^{\frac{k}{k-1}}}$, $\gamma\geq\max\{\widetilde{\lambda}^{\frac{k}{k-1}},12^{\frac{2k}{k+1}}\widetilde{\lambda}^{\frac{2k}{2k-1}}\}$ and consider the function

\begin{equation*}
    \widetilde{f}_{\gamma}(x_{1},...,x_{d})=
    \frac{1}{k+1} 
    \sum_{i=1}^{\widetilde{T}-1}
    |x_{i}-x_{i+1}|^{k+1}-\gamma x_{1}+
    \frac{\widetilde{\lambda}}{2}\|x\|^{2}~.
\end{equation*}
For the sake of notational simplicity, we will assume $\gamma$ is fixed and denote the function by $\widetilde{f}$. Note that $\widetilde{f}$ is a $\widetilde{\lambda}$-strongly convex function as it is the sum of $\widetilde{T}$ convex functions and a $\widetilde{\lambda}$-strongly convex function. Thus $\widetilde{x}^{*}:=\underset{x\in\mathbb{R}^{d}}{\arg\min}\widetilde{f}(x)$ exists and is unique. We will now prove some of it's properties.

\begin{lemma} \label{lemma: 4 part minimizer lemma}
\begin{enumerate}
    \item $\forall t \in [\widetilde{T}]: \widetilde{x}^{*}_{t}\geq 0$~.
    \item $\widetilde{x}^{*}_{1}\geq\widetilde{x}^{*}_{2}\geq \dots \widetilde{x}^{*}_{\widetilde{T}}$~.
    \item $\forall t\in[\widetilde{T}-1]: \widetilde{x}_{t+1}^{*}=\widetilde{x}_{t}^{*}-(\gamma-\widetilde{\lambda}{\sum}_{j=1}^{t}\widetilde{x}_{j}^{*})^{1/k}$~.
    \item ${\sum}_{t=1}^{\widetilde{T}}\widetilde{x}_{t}^{*}=\frac{\gamma}{\widetilde{\lambda}}$~.
\end{enumerate}
\end{lemma}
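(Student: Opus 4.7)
The plan is to characterize $\widetilde{x}^{*}$ via the first-order optimality conditions $\nabla\widetilde{f}(\widetilde{x}^{*})=0$ and extract all four claims directly from the resulting algebraic system. Since $\widetilde{f}$ is strictly convex (as noted in the excerpt) and coercive, the unique minimizer exists and $\nabla\widetilde{f}$ vanishes there. Introducing the auxiliary quantities $y_{t}:=\mathrm{sign}(\widetilde{x}^{*}_{t}-\widetilde{x}^{*}_{t+1})\,|\widetilde{x}^{*}_{t}-\widetilde{x}^{*}_{t+1}|^{k}$ for $t\in[\widetilde{T}-1]$, the coordinatewise stationarity conditions become: $\widetilde{\lambda}\widetilde{x}^{*}_{t}=0$ for $t>\widetilde{T}$; $y_{1}+\widetilde{\lambda}\widetilde{x}^{*}_{1}=\gamma$ at $t=1$; $-y_{t-1}+y_{t}+\widetilde{\lambda}\widetilde{x}^{*}_{t}=0$ for $1<t<\widetilde{T}$; and $\widetilde{\lambda}\widetilde{x}^{*}_{\widetilde{T}}=y_{\widetilde{T}-1}$ at $t=\widetilde{T}$. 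Summing the first $t$ equations yields $y_{t}=\gamma-\widetilde{\lambda}S_{t}$ with $S_{t}:=\sum_{j=1}^{t}\widetilde{x}^{*}_{j}$, and summing all $\widetilde{T}$ equations telescopes to $S_{\widetilde{T}}=\gamma/\widetilde{\lambda}$, which is exactly claim (4).

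With (4) in hand, the heart of the proof is establishing the base case $\widetilde{x}^{*}_{\widetilde{T}}>0$ of a backward induction. I rule out the two other sign possibilities: if $\widetilde{x}^{*}_{\widetilde{T}}=0$ then $y_{\widetilde{T}-1}=0$, which by definition of $y_{\widetilde{T}-1}$ forces $\widetilde{x}^{*}_{\widetilde{T}-1}=\widetilde{x}^{*}_{\widetilde{T}}=0$, and propagating the recurrence $y_{t-1}=y_{t}+\widetilde{\lambda}\widetilde{x}^{*}_{t}$ backward drives $\widetilde{x}^{*}_{t}=0$ for all $t\le\widetilde{T}$, contradicting (4) since $\gamma>0$; if $\widetilde{x}^{*}_{\widetilde{T}}<0$ then $y_{\widetilde{T}-1}<0$, so $\widetilde{x}^{*}_{\widetilde{T}-1}<\widetilde{x}^{*}_{\widetilde{T}}<0$, and a similar propagation makes $y_{t}<0$ and $\widetilde{x}^{*}_{t}<0$ throughout, again contradicting (4).

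Having secured $\widetilde{x}^{*}_{\widetilde{T}}>0$, I would run a backward induction on $t$ proving the joint statement ``$y_{t}>0$ and $\widetilde{x}^{*}_{t+1}>0$'': the base case $t=\widetilde{T}-1$ is immediate from $y_{\widetilde{T}-1}=\widetilde{\lambda}\widetilde{x}^{*}_{\widetilde{T}}>0$, and the step uses that $y_{t+1}>0$ forces $\widetilde{x}^{*}_{t+1}>\widetilde{x}^{*}_{t+2}>0$, after which $y_{t}=y_{t+1}+\widetilde{\lambda}\widetilde{x}^{*}_{t+1}>0$. This simultaneously delivers claims (1) and (2) (with strict inequalities, hence the weak ones stated). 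Claim (3) is then a rewrite: since $y_{t}>0$, the identity $\widetilde{x}^{*}_{t}-\widetilde{x}^{*}_{t+1}=y_{t}^{1/k}=(\gamma-\widetilde{\lambda}S_{t})^{1/k}$ rearranges to the stated formula.

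The main obstacle is precisely the base-case positivity $\widetilde{x}^{*}_{\widetilde{T}}>0$: a naive sorting/rearrangement argument for monotonicity does not directly apply, because swapping adjacent entries in $\sum|x_{i}-x_{i+1}|^{k+1}$ need not decrease the cost (the change reduces to a term like $(x_{t-1}-x_{t+2})(x_{t+1}-x_{t})$ whose sign is not controlled). Instead, one must exploit the algebraic structure of the KKT system itself. Once the tail positivity is locked in, everything else cascades through the two simple recurrences $y_{t}=\gamma-\widetilde{\lambda}S_{t}$ and $y_{t-1}=y_{t}+\widetilde{\lambda}\widetilde{x}^{*}_{t}$.
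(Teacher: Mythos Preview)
Your proof is correct and complete, but it follows a genuinely different route from the paper. The paper first establishes claims (1) and (2) via variational comparison arguments: for (1) it shows $\widetilde{x}^{*}_{1}>0$ by evaluating $\widetilde{f}$ at the origin, then compares $\widetilde{x}^{*}$ against the vector of absolute values $\hat{x}_{i}=|\widetilde{x}^{*}_{i}|$; for (2) it assumes a violation at some index $i_{0}$ and explicitly constructs a competitor by subtracting the overshoot $\delta$ from the tail (and truncating at zero), then checks this strictly lowers the objective. Only after these sign/ordering facts are in hand does the paper write the KKT system, with the absolute values already resolved, and read off (3) and (4). You proceed in the opposite order: you write down the KKT system with the signed powers $y_{t}$ intact, obtain (4) immediately by telescoping (no sign information needed), and then use (4) as a global constraint to pin down the sign of the tail coordinate $\widetilde{x}^{*}_{\widetilde{T}}$ by elimination, after which a clean backward induction through the recurrence $y_{t}=y_{t+1}+\widetilde{\lambda}\widetilde{x}^{*}_{t+1}$ delivers (1), (2), and (3) simultaneously. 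Your approach is purely algebraic and arguably tighter---it yields strict positivity and strict monotonicity as a byproduct---while the paper's competitor constructions are more geometric and would transfer more easily to variants of the objective where the stationarity recurrences are less explicit.
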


\begin{proof}
\begin{enumerate}
    \item 
First we will show that $\widetilde{x}_{1}^{*}>0$. It is easily verified that $\widetilde{f}(0)=0,\nabla\widetilde{f}(0)=-\gamma e_{1}\neq0$, thus $\widetilde{f}(\widetilde{x}^{*})<0$. The only negative term in the definition of $\widetilde{f}$ is $-\gamma x_{1}$, so we deduce that $\widetilde{x}_{1}^{*}>0$. Now define $\hat{x}$ as follows: $\hat{x}_{i}=|\widetilde{x}_{i}^{*}|$. In particular $\hat{x}_{1}=\tilde{x}_{1}^{*}$, thus by the reverse triangle inequality:
\begin{equation*}
\widetilde{f}(\hat{x})-\widetilde{f}(\widetilde{x}^{*})=\frac{1}{k+1}
\sum_{i=1}^{\widetilde{T}-1}
\left||\hat{x}_{i}|-|\hat{x}_{i+1}|\right|^{k+1}-|\widetilde{x}_{i}^{*}-\widetilde{x}_{i+1}^{*}|^{k+1}\leq0~.
\end{equation*}
But $\widetilde{x}^{*}$ is the unique minimizer, so $\hat{x}=x^{*}$.

\item 
Otherwise, let $i_{0}$ be the minimal index for which $\widetilde{x}_{i_{0}}^{*}<\widetilde{x}_{i_{0}+1}^{*}$. Denote $\delta := \widetilde{x}_{i_{0}+1}^{*}-\widetilde{x}_{i_{0}}^{*}>0$, and define the vector $\hat{x}$ as follows:
\begin{equation*}
\hat{x}_{i}= 
\begin{cases}
\widetilde{x}_{i}^{*} & i\leq i_{0}\\
max\{0,\widetilde{x}_{i}^{*}-\delta\} & i>i_{0}~.
\end{cases}
\end{equation*}

Notice that $\hat{x}\neq x^{*}$ because
\begin{equation*}
    \hat{x}_{i_{0}+1}=max\{0,\widetilde{x}_{i_{0}+1}^{*}-\delta\}=max\{0,\widetilde{x}_{i_{0}}^{*}\}=\widetilde{x}_{i_{0}}^{*}=\hat{x}_{i_{0}}~,
\end{equation*}
although $x_{i_{0}}^{*}\neq x_{i_{0}+1}^{*}$ by assumption. Furthermore, 
\begin{equation*}
\forall i:|\hat{x}_{i}-\hat{x}_{i+1}|^{k+1}\leq|\widetilde{x}_{i}^{*}-\widetilde{x}_{i+1}^{*}|^{k+1},\ \|\hat{x}\|^{2}\leq\|\widetilde{x}^{*}\|^{2}~,
\end{equation*}

thus $\widetilde{x}^{*}$ is not the minimizer of $\widetilde{f}$ which is a contradiction.

\item
Differentiating $\widetilde{f}$ and setting to 0, while noticing that at the minimizer we can disregard the absolute values (due to the previous sub-lemma) we obtain the relations:

\begin{equation} \label{originally 1}
\begin{cases}
(\widetilde{x}_{1}^{*}-\widetilde{x}_{2}^{*})^{k}=\gamma-\lambda\widetilde{x}_{1}^{*}\\
(\widetilde{x}_{t}^{*}-\widetilde{x}_{t+1}^{*})^{k}=(\widetilde{x}_{t-1}^{*}-\widetilde{x}_{t}^{*})^{k}-\lambda\widetilde{x}_{t}^{*}, & \forall t\in\{2,3,...,\widetilde{T}-1\}\\
(\widetilde{x}_{\widetilde{T}-1}^{*}-\widetilde{x}_{\widetilde{T}}^{*})^{k}=\widetilde{\lambda}x_{\widetilde{T}}^{*}~.
\end{cases}
\end{equation}

The recursive relation reveals that 
$\forall t\in[\widetilde{T}-1]:(\widetilde{x}_{t}^{*}-\widetilde{x}_{t+1}^{*})^{k}=\gamma-\widetilde{\lambda}\sum_{j=1}^{t}\widetilde{x}_{j}^{*}$ which by rearranging (and recalling $\widetilde{x}_{t}^{*}\geq\widetilde{x}_{t+1}^{*})$ gives the desired result.

\item
Summing \eqref{originally 1} over $t\in\{2,...,\widetilde{T}-1\}$ reveals: 
\begin{equation*}
    (\widetilde{x}_{\widetilde{T}-1}^{*}-\widetilde{x}_{\widetilde{T}}^{*})^{k}
=(\widetilde{x}_{1}^{*}-\widetilde{x}_{2}^{*})^{k}-\widetilde{\lambda}\sum_{i=2}^{\widetilde{T}-1}\widetilde{x}_{i}^{*}~.
\end{equation*}

Plugging in the two remaining relations from \eqref{originally 1} and rearranging finishes the proof.

\end{enumerate}
\end{proof}

\begin{lemma} \label{lemma: lower bound on minimizer coordinate decay}
$\forall t\in[\widetilde{T}]:\ \widetilde{x}_{t}^{*}\geq max\left\{0,\frac{\gamma^{\frac{k+1}{2k}}}{12\sqrt{\widetilde{\lambda}}}+(\frac{1}{2}-t)\gamma^{1/k}\right\}$~.
\end{lemma}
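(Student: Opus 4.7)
The plan is to first reduce the claimed inequality to a lower bound on $\widetilde{x}_1^*$ alone, and then to establish that bound by a case split on a partial sum of the sequence. For the reduction, note that by Lemma~\ref{lemma: 4 part minimizer lemma}(3), the consecutive gaps
\[
a_s := \widetilde{x}_s^* - \widetilde{x}_{s+1}^* = \Bigl(\gamma - \widetilde{\lambda}\sum_{j=1}^s \widetilde{x}_j^*\Bigr)^{1/k}
\]
are non-increasing in $s$ (since the partial sums grow, so the expression inside the root shrinks) and trivially satisfy $a_s \leq \gamma^{1/k}$. Telescoping yields $\widetilde{x}_t^* \geq \widetilde{x}_1^* - (t-1)\gamma^{1/k}$, and together with $\widetilde{x}_t^* \geq 0$ from Lemma~\ref{lemma: 4 part minimizer lemma}(1), the claim reduces to showing
\[
\widetilde{x}_1^* \;\geq\; \frac{\gamma^{(k+1)/(2k)}}{12\sqrt{\widetilde{\lambda}}} - \tfrac{1}{2}\gamma^{1/k}.
\]
In particular, the cleaner bound $\widetilde{x}_1^* \geq \gamma^{(k+1)/(2k)}/(12\sqrt{\widetilde{\lambda}})$ is already enough.

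For the core lower bound on $\widetilde{x}_1^*$, I pick a cutoff index $t_0 = \Theta(\gamma^{(k-1)/(2k)}/\sqrt{\widetilde{\lambda}})$---concretely, $t_0 := \lceil 5\gamma^{(k-1)/(2k)}/\sqrt{\widetilde{\lambda}}\,\rceil$---and verify $1 \leq t_0 \leq \widetilde{T}$ using the assumptions $\gamma \geq \widetilde{\lambda}^{k/(k-1)}$ and $\widetilde{T} \geq 4\gamma/\widetilde{\lambda}^{k/(k-1)}$. Then I split on whether $\widetilde{\lambda}\sum_{j=1}^{t_0} \widetilde{x}_j^* \leq \gamma/2$ or not. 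In the first case, $a_{t_0}^k = \gamma - \widetilde{\lambda}\sum_{j\leq t_0}\widetilde{x}_j^* \geq \gamma/2$, and monotonicity of $(a_s)$ propagates this to $a_s \geq (\gamma/2)^{1/k}$ for every $s \leq t_0$, yielding
\[
\widetilde{x}_1^* \;\geq\; \widetilde{x}_1^* - \widetilde{x}_{t_0}^* \;=\; \sum_{s=1}^{t_0 - 1} a_s \;\geq\; (t_0 - 1)\,(\gamma/2)^{1/k},
\]
which is of order $\gamma^{(k+1)/(2k)}/\sqrt{\widetilde{\lambda}}$ as required. In the complementary case, $\sum_{j=1}^{t_0}\widetilde{x}_j^* > \gamma/(2\widetilde{\lambda})$, and since $\widetilde{x}_j^* \leq \widetilde{x}_1^*$ by Lemma~\ref{lemma: 4 part minimizer lemma}(2), $\widetilde{x}_1^* \geq \tfrac{1}{t_0}\sum_{j\leq t_0}\widetilde{x}_j^* > \gamma/(2\widetilde{\lambda}t_0)$, which is again of the same order.

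I expect the main obstacle to be the numerical bookkeeping needed to make both branches of the dichotomy simultaneously deliver a lower bound with constant at least $1/12$, after accounting for the ceiling in the definition of $t_0$ and for the additive slack $\tfrac{1}{2}\gamma^{1/k}$ absorbed in the reduction step. The second clause of the hypothesis, $\gamma \geq 12^{2k/(k+1)}\widetilde{\lambda}^{2k/(2k-1)}$, appears tailored precisely to absorb the worst-case loss in these manipulations (which arises in the second branch, where the constant is set by the exact multiplier used to define $t_0$), while the first clause $\gamma \geq \widetilde{\lambda}^{k/(k-1)}$ ensures both that $t_0$ stays within $[1,\widetilde{T}]$ and that the slack $\tfrac{1}{2}\gamma^{1/k}$ is dominated by the main term whenever the latter is positive.
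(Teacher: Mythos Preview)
Your argument is correct and takes a cleaner, more direct route than the paper's. Both proofs share the initial telescoping reduction to a lower bound on $\widetilde{x}_1^*$. From there, the paper first establishes an \emph{upper} bound $\widetilde{x}_1^* \leq \gamma^{1/k} + \sqrt{2\gamma^{1+1/k}/\widetilde{\lambda}}$ (via the sum constraint $\sum_t \widetilde{x}_t^* = \gamma/\widetilde{\lambda}$ combined with the telescoping inequality), then defines $t_0$ implicitly as the first time the partial sum exceeds $\tfrac{2^k-1}{2^k}\cdot\gamma/\widetilde{\lambda}$, uses the upper bound on $\widetilde{x}_1^*$ to lower-bound that $t_0$, and finally combines $\widetilde{x}_{t_0}^* \geq 0$ with the gap bound $a_s \geq \gamma^{1/k}/2$ (valid before $t_0$) to extract the lower bound on $\widetilde{x}_1^*$. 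Your explicit-cutoff dichotomy bypasses the detour through the upper bound on $\widetilde{x}_1^*$; that upper bound is not wasted in the paper, though, as it is reused in the proof of \lemref{lemma: minimizer norm bound}.

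Two small points. First, the claim that $t_0 \leq \widetilde{T}$ follows from the stated hypotheses is not quite right with your constant~$5$: writing $u := \gamma^{(k-1)/(2k)}/\sqrt{\widetilde{\lambda}}$, the first hypothesis gives only $u \geq 1$ and $\widetilde{T} \geq 4u^{2k/(k-1)}$, and for $u$ close to $1$ one has $\lceil 5u \rceil > 4u^{2k/(k-1)}$. The cleanest patch is to replace $t_0$ by $\min(t_0,\widetilde{T})$: if the minimum is attained at $\widetilde{T}$, then by \lemref{lemma: 4 part minimizer lemma}(4) the partial sum equals $\gamma/\widetilde{\lambda} > \gamma/(2\widetilde{\lambda})$, so you land in your second branch, and since the truncated cutoff is still at most $6\gamma^{(k-1)/(2k)}/\sqrt{\widetilde{\lambda}}$ the Case-2 computation goes through unchanged. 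Second, your guess about the role of the second hypothesis $\gamma \geq 12^{2k/(k+1)}\widetilde{\lambda}^{2k/(2k-1)}$ is off: neither your argument nor the paper's invokes it for this lemma (only $\gamma \geq \widetilde{\lambda}^{k/(k-1)}$ is used); it enters in the subsequent lemma, to ensure $\widetilde{x}_1^* \geq \widetilde{\lambda}^{1/(k-1)}/2$.
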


\begin{proof}
From \lemref{lemma: 4 part minimizer lemma}.3, using \lemref{lemma: 4 part minimizer lemma}.1 we get that 
$\forall t\in[\widetilde{T}-1]:x_{t+1}^{*}\geq\widetilde{x}_{t}^{*}-\gamma^{1/k}$. Inductively: 
\begin{equation} \label{originally 2}
\forall t\in[\widetilde{T}]:\widetilde{x}_{t+1}^{*}\geq\widetilde{x}_{1}^{*}-(t-1)\gamma^{1/k}~.
\end{equation}
Using \lemref{lemma: 4 part minimizer lemma}.4 and rolling up our sleeves:
\begin{equation*}
\frac{\gamma}{\widetilde{\lambda}}
={\sum}^{\widetilde{T}}_{t=1}\widetilde{x}_{t}^{*}
\geq\sum^{\widetilde{T}}_{t=1}max\{0,\widetilde{x}_{1}^{*}-(t-1)\gamma^{1/k}\}
=\sum^{\lfloor\frac{\widetilde{x}_{1}^{*}}{\gamma^{1/k}}+1\rfloor}_{t=1}\left(\widetilde{x}_{1}^{*}-(t-1)\gamma^{1/k}\right)
\end{equation*}

\begin{equation*}
=\lfloor\frac{\widetilde{x}_{1}^{*}}{(\gamma)^{1/k}}+1\rfloor\widetilde{x}_{1}^{*}-\gamma^{1/k}\left(\frac{(\lfloor\frac{\widetilde{x}_{1}^{*}}{\gamma^{1/k}}+1\rfloor-1)\lfloor\frac{\widetilde{x}_{1}^{*}}{\gamma^{1/k}}+1\rfloor}{2}\right)
\geq\frac{(\widetilde{x}_{1}^{*})^{2}}{\gamma^{1/k}}-\frac{\gamma^{1/k}}{2}\left(\frac{\widetilde{x}_{1}^{*}}{\gamma^{1/k}}\right)\left(\frac{\widetilde{x}_{1}^{*}}{\gamma^{1/k}}+1\right)~.
\end{equation*}
By rearranging we obtain: $ (\widetilde{x}_{1}^{*})^{2}-\gamma^{1/k}\cdot\widetilde{x}_{1}^{*}-\frac{2\gamma^{1+1/k}}{\widetilde{\lambda}}\leq0 $, which implies via the quadratic formula:

\begin{equation} \label{originally 3}
\widetilde{x}_{1}^{*}\leq\frac{1}{2}\left(\gamma^{1/k}+\sqrt{\gamma^{2/k}+\frac{8(\gamma)^{1+1/k}}{\widetilde{\lambda}}}\right)
\leq\gamma^{1/k}+\sqrt{\frac{2(\gamma)^{1+1/k}}{\widetilde{\lambda}}}~.
\end{equation}
On the other hand, using \lemref{lemma: 4 part minimizer lemma}.3 again, we have that if $t\in[\widetilde{T}-1]$ satisfies $\sum^{t}_{j=1}\widetilde{x}_{j}^{*}\leq\frac{2^{k}-1}{2^{k}}\frac{\gamma}{\widetilde{\lambda}}$, then: $\widetilde{x}_{t+1}^{*}\leq\widetilde{x}_{t}^{*}-(\gamma-\frac{2^{k-1}-1}{2^{k}}\gamma)^{1/k}=\widetilde{x}_{t}^{*}-\frac{\gamma^{1/k}}{2}$. Inductively:
\begin{equation} \label{originally 4}
    \forall t\in[T-1]:{\sum^{t}_{j=1}}x_{j}^{*}\leq\frac{2^{k}-1}{2^{k}}\frac{\gamma}{\lambda} 
\Longrightarrow x_{t+1}^{*}\leq x_{1}^{*}-t\frac{\gamma^{1/k}}{2} ~.
\end{equation}
Let $t_{0}$ be the minimal index such that $\sum^{t_{0}}_{j=1}\widetilde{x}_{j}^{*}>\frac{2^{k}-1}{2^{k}}\frac{\gamma}{\widetilde{\lambda}}$ (such an index exists from \lemref{lemma: 4 part minimizer lemma}.4). Combining \lemref{lemma: 4 part minimizer lemma}.2 with \eqref{originally 3} reveals:
\begin{equation*}
\frac{2^{k}-1}{2^{k}}\frac{\gamma}{\widetilde{\lambda}}
<{\sum^{t_{0}}_{j=1}}\widetilde{x}_{j}^{*}
\leq t_{0}\widetilde{x}_{1}^{*}\leq t_{0}\left(\gamma^{1/k}+\sqrt{\frac{2(\gamma)^{1+1/k}}{\widetilde{\lambda}}}\right)
\end{equation*}
\begin{equation*}
\Longrightarrow t_{0}\geq\frac{2^{k}-1}{2^{k}}\cdot\frac{\gamma}{\widetilde{\lambda}\gamma^{1/k}+\sqrt{2\widetilde{\lambda}(\gamma)^{1+1/k}}}~.
\end{equation*}
From the minimality of $t_{0}$, we know that \eqref{originally 4} applies to $t_{0}-1$, thus: $0\leq\widetilde{x}_{t_{0}}^{*}\leq\widetilde{x}_{1}^{*}-\frac{(t_{0}-1)}{2}(\gamma)^{1/k}$. We obtain:
\begin{equation*}
\widetilde{x}_{1}^{*}\geq\frac{t_{0}\gamma^{1/k}}{2}-\frac{\gamma^{1/k}}{2}
\geq\frac{2^{k}-1}{2^{k+1}}\cdot\frac{\gamma^{1+1/k}}{\widetilde{\lambda}\gamma^{1/k}+\sqrt{2\widetilde{\lambda}\gamma^{1+1/k}}}-\frac{\gamma^{1/k}}{2}~.
\end{equation*}
Finally, plugging the last inequality into \eqref{originally 2} and rearranging gives
\begin{equation*}
\widetilde{x}_{t}^{*}\geq max\left\{0,\frac{2^{k}-1}{2^{k+1}}\left(\frac{\gamma}{\widetilde{\lambda}+\sqrt{2\widetilde{\lambda}\gamma^{1-1/k}}}\right)+\left(\frac{1}{2}-t\right)\gamma^{1/k}\right\}~.
\end{equation*}
The lemma follows easily from our assumption that $\gamma\geq\widetilde{\lambda}^{\frac{k}{k-1}}$. 
\end{proof}

\begin{lemma}
There exists an index $t_{0}\leq\frac{\widetilde{T}}{2}$ such that:
$$\widetilde{x}_{t_{0}+j}^{*}\geq\widetilde{\lambda}^{\frac{1}{k-1}}(6)^{-k^{j+1}},\forall j\in\{0,1,...,\widetilde{T}-t_{0}\}~.$$
\end{lemma}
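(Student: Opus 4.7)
The plan is to work in the rescaled variables $y_t := \widetilde{x}_t^*/\widetilde{\lambda}^{1/(k-1)}$, in which the claim becomes: there exists $t_0 \leq \widetilde{T}/2$ with $y_{t_0+j} \geq 6^{-k^{j+1}}$ for every $j \in \{0,1,\ldots,\widetilde{T}-t_0\}$. In these coordinates, \lemref{lemma: 4 part minimizer lemma}.4 reads $\sum_{t=1}^{\widetilde{T}} y_t = \alpha$ where $\alpha := \gamma/\widetilde{\lambda}^{k/(k-1)} \geq 1$ by assumption, and combining \lemref{lemma: 4 part minimizer lemma}.3 with \lemref{lemma: 4 part minimizer lemma}.4 gives the clean form $y_t - y_{t+1} = \bigl(\sum_{j=t+1}^{\widetilde{T}} y_j\bigr)^{1/k}$ for every $t \in \{1,\ldots,\widetilde{T}-1\}$.

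The first step is to show $y_1 \geq 1/2$ unconditionally. Plugging $t=1$ into the recursion above and using $y_2 \geq 0$ (\lemref{lemma: 4 part minimizer lemma}.1) yields $y_1 \geq (\alpha - y_1)^{1/k}$, hence $y_1^k + y_1 \geq \alpha \geq 1$; if $y_1 \leq 1$ then $y_1^k \leq y_1$ and so $y_1 \geq 1/2$, while otherwise the bound is trivial. I would then define $t_0$ to be the largest index with $y_{t_0} \geq 1/2$. By \lemref{lemma: 4 part minimizer lemma}.2, each of $y_1, \ldots, y_{t_0}$ is at least $1/2$, so the identity $\sum_t y_t = \alpha$ forces $t_0/2 \leq \alpha$, and the hypothesis $\widetilde{T} \geq 4\alpha$ then yields $t_0 \leq 2\alpha \leq \widetilde{T}/2$ as required. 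Moreover, by the maximality of $t_0$, we have $y_{t+1} < 1/2$ for every $t \geq t_0$.

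The heart of the argument is the one-step decay bound $y_{t+1} \geq y_t^k / 2^{k+1}$ for every $t \geq t_0$. Its proof rests on the tail-sum inequality $\sum_{j=t+1}^{\widetilde{T}} y_j \leq 2 y_{t+1}$, which I would establish as follows. The upper bound $y_{s+1} \leq y_s^k$ follows immediately from the recursion together with $\sum_{j \geq s+1} y_j \geq y_{s+1}$; cascading this yields $y_{t+1+s} \leq y_{t+1}^{k^s}$ for every $s \geq 0$. Since $k^s \geq s+1$ for $k \geq 2$ and $y_{t+1} \in (0,1)$, we then have
\begin{equation*}
\sum_{j=t+1}^{\widetilde{T}} y_j \;\leq\; \sum_{s=0}^{\infty} y_{t+1}^{k^s} \;\leq\; \sum_{s=0}^{\infty} y_{t+1}^{s+1} \;=\; \frac{y_{t+1}}{1 - y_{t+1}} \;\leq\; 2 y_{t+1},
\end{equation*}
where the last step uses $y_{t+1} \leq 1/2$. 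Plugging back into the recursion and using $y_{t+1} \leq y_{t+1}^{1/k}$ (valid since $y_{t+1} \leq 1$) gives $y_t \leq y_{t+1} + 2^{1/k} y_{t+1}^{1/k} \leq 2^{(k+1)/k} y_{t+1}^{1/k}$, and the claimed decay bound follows after raising to the $k$-th power.

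Finally, iterating this bound starting from $y_{t_0} \geq 1/2$ yields $y_{t_0+j} \geq 2^{-a_j}$, where $a_0 = 1$ and $a_{j+1} = k a_j + (k+1)$, which solves explicitly as $a_j = (2k^{j+1} - k - 1)/(k-1) \leq 2k^{j+1}/(k-1)$. The target $2^{-a_j} \geq 6^{-k^{j+1}}$ then reduces to the elementary inequality $4 \leq 6^{k-1}$, which holds for every $k \geq 2$. Rescaling by $\widetilde{\lambda}^{1/(k-1)}$ gives the conclusion $\widetilde{x}_{t_0+j}^* \geq \widetilde{\lambda}^{1/(k-1)} 6^{-k^{j+1}}$. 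The main technical obstacle in this plan is the tail-sum estimate, since it requires cascading the inequality $y_{s+1} \leq y_s^k$ across many indices and carefully controlling the resulting doubly-decaying sum; once that estimate is in hand, the choice of $t_0$ and the induction are routine.
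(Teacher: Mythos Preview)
Your proof is correct and follows essentially the same strategy as the paper's: locate the threshold index $t_0$ where the (rescaled) coordinates first drop below $1/2$, bound the tail sum $\sum_{j>t}\widetilde{x}_j^*$ by $2\widetilde{x}_{t+1}^*$ via the doubly-exponential upper bound $\widetilde{x}_{t+j}^*\le (\widetilde{x}_t^*)^{k^j}/\widetilde{\lambda}^{(k^j-1)/(k-1)}$, turn this into a one-step lower-bound recursion, and iterate. Your rescaling and the direct argument that $y_1\ge 1/2$ (from $y_1^k+y_1\ge\alpha\ge1$) are clean simplifications --- the paper instead verifies $\widetilde{x}_1^*\ge\widetilde{\lambda}^{1/(k-1)}/2$ by invoking \lemref{lemma: lower bound on minimizer coordinate decay} together with the extra hypothesis $\gamma\ge 12^{2k/(k+1)}\widetilde{\lambda}^{2k/(2k-1)}$, which your route shows is not needed for this lemma.
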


\begin{proof}
By \lemref{lemma: 4 part minimizer lemma}.4, $\forall t\in[\widetilde{T}-1]:$
\begin{gather} \label{originally 5}
\widetilde{x}_{t}^{*}
=\widetilde{x}_{t+1}^{*}+\left(\gamma-\widetilde{\lambda}\sum^{t}_{j=1}\widetilde{x}_{j}^{*}\right)^{1/k}
=\widetilde{x}_{t+1}^{*}+\left(\gamma-\widetilde{\lambda}\left(\frac{\gamma}{\widetilde{\lambda}}-\sum_{j=t+1}^{\widetilde{T}}\widetilde{x}_{j}^{*}\right)\right)^{1/k}\\
=\widetilde{x}_{t+1}^{*}+\left(\widetilde{\lambda}\sum_{j=t+1}^{\widetilde{T}}\widetilde{x}_{j}^{*}\right)^{1/k}. \nonumber
\end{gather}
In particular, due to \lemref{lemma: 4 part minimizer lemma}.1:
\begin{equation*}
\widetilde{x}_{t}^{*}
\geq\left(\widetilde{\lambda}\sum_{j=t+1}^{\widetilde{T}}\widetilde{x}_{j}^{*}\right)^{1/k}
\geq\left(\widetilde{\lambda}\widetilde{x}_{t+1}^{*}\right)^{1/k}
\Longrightarrow\forall t\in[\widetilde{T}-1]:
\widetilde{x}_{t+1}^{*}
\leq\frac{1}{\widetilde{\lambda}}\left(\widetilde{x}_{t}^{*}\right)^{k}~,
\end{equation*}
which by induction takes the form:
\begin{equation} \label{originally 6}
    \widetilde{x}_{t+j}^{*}
\leq\frac{(\widetilde{x}_{t}^{*})^{k^{i}}}{(\widetilde{\lambda})^{\frac{k^{i}-1}{k-1}}},
\forall t,j\geq0:j+t\leq\widetilde{T}~.
\end{equation}
Now let's fix an index $t\in[\widetilde{T}-1]$ for which $\widetilde{x}_{t+1}^{*}\leq\frac{\widetilde{\lambda}^{\frac{1}{k-1}}}{2}$. Notice that such an index exists due to our assumption on $\widetilde{T}$.\footnote{
$
\frac{\gamma}{\widetilde{\lambda}}
={\sum_{j=1}^{\widetilde{T}}}\widetilde{x}_{j}^{*}
\geq\widetilde{T}\widetilde{x}_{\widetilde{T}}^{*}
>\frac{2\gamma}{\widetilde{\lambda}^{\frac{k}{k-1}}}\widetilde{x}_{\widetilde{T}}^{*}
\Longrightarrow\widetilde{x}_{\widetilde{T}}^{*}
\leq\frac{\widetilde{\lambda}^{\frac{1}{k-1}}}{2}
$} For such an index, we get from \eqref{originally 6}:

\begin{gather*}
\sum_{j=t+1}^{\widetilde{T}}\widetilde{x}_{j}^{*}
=\sum_{j=0}^{\widetilde{T}-t-1}\widetilde{x}_{t+1+j}^{*}
\leq\sum_{j=0}^{\widetilde{T}-t-1}\frac{(\widetilde{x}_{t+1}^{*})^{k^{j}}}{(\widetilde{\lambda})^{\frac{k^{j}-1}{k-1}}}\\
=\sum_{j=0}^{\widetilde{T}-t-1}\widetilde{x}_{t+1}^{*}\left(\frac{\widetilde{x}_{t+1}^{*}}{(\widetilde{\lambda})^{\frac{1}{k-1}}}\right)^{k^{j}-1}
\leq\widetilde{x}_{t+1}^{*}\sum_{j=0}^{\widetilde{T}-t-1}\left(\frac{1}{2}\right)^{k^{j}-1}
\leq2\widetilde{x}_{t+1}^{*}~.
\end{gather*}
Combining the last inequality, \eqref{originally 5}, and \lemref{lemma: 4 part minimizer lemma}.2 we get that for all $t\in[\widetilde{T}-1]$ such that $\widetilde{x}_{t+1}^{*}\leq\frac{\widetilde{\lambda}^{\frac{1}{k-1}}}{2}$:
\begin{gather*}
\widetilde{x}_{t}^{*}
=\widetilde{x}_{t+1}^{*}+\left(\widetilde{\lambda}\sum_{j=t+1}^{\widetilde{T}}\widetilde{x}_{j}^{*}\right)^{1/k}
\leq\widetilde{x}_{t+1}^{*}+\left(2\widetilde{\lambda}\widetilde{x}_{t+1}^{*}\right)^{1/k}
=\left(\widetilde{x}_{t+1}^{*}\right)^{1/k}\left((\widetilde{x}_{t+1}^{*})^{\frac{k-1}{k}}+(2\widetilde{\lambda})^{1/k}\right)\\
\leq(\widetilde{x}_{t+1}^{*})^{1/k}\left(\frac{(\widetilde{\lambda})^{1/k}}{2^{\frac{k-1}{k}}}+(2\widetilde{\lambda})^{1/k}\right)
=(\widetilde{\lambda}\widetilde{x}_{t+1}^{*})^{1/k}(2^{1-1/k}+2^{1/k})
\leq3(\widetilde{\lambda}\widetilde{x}_{t+1}^{*})^{1/k}~,
\end{gather*}
where the last inequality can be easily verified for $k\in\mathbb{N}$. Overall we have:
\begin{equation} \label{originally 7}
    \forall t\in[\widetilde{T}-1]:\widetilde{x}_{t+1}^{*}\leq\frac{\widetilde{\lambda}^{\frac{1}{k-1}}}{2}
\Longrightarrow \widetilde{x}_{t+1}^{*}\geq\frac{1}{\widetilde{\lambda}}\left(\frac{\widetilde{x}_{t}^{*}}{3}\right)^{k}~.
\end{equation}
Now let's fix $t_{0}$ as the unique index for which $\widetilde{x}_{t_{0}}^{*}>\frac{\widetilde{\lambda}^{\frac{1}{k-1}}}{2}$ and $\widetilde{x}_{t_{0}+1}^{*}\leq\frac{\widetilde{\lambda}^{\frac{1}{k-1}}}{2}$. Note that $t_{0}\leq\widetilde{T}/2$ because of our extra assumptions. Indeed, if we denote by $t_{1}$ the maximal index such that $\widetilde{x}_{t_{1}}^{*}>\frac{\widetilde{\lambda}^{\frac{1}{k-1}}}{2}$ then
\begin{equation*}
\frac{\gamma}{\widetilde{\lambda}}
\geq\sum_{t=1}^{t_{1}}\widetilde{x}_{t}^{*}
>t_{1}\cdot\frac{\widetilde{\lambda}^{\frac{1}{k-1}}}{2}
\Longrightarrow t_{1}
<\frac{2\gamma}{\widetilde{\lambda}^{\frac{k}{k-1}}}
\leq\frac{\widetilde{T}}{2}~,
\end{equation*}
where the last inequality follows from our assumption on $\widetilde{T}$. On the the other hand, it can be verified using \lemref{lemma: lower bound on minimizer coordinate decay} that $\widetilde{x}_{1}^{*}\geq\frac{\widetilde{\lambda}^{\frac{1}{k-1}}}{2}$ under our assumption $\gamma\geq(12)^{\frac{2k}{k+1}}(\widetilde{\lambda})^{\frac{2k}{2k-1}} $. Now, for all $j\leq\widetilde{T}-t_{0}$ we can use \eqref{originally 7} inductively:
\begin{gather*}
\widetilde{x}_{t_{0}+j}^{*}
\geq\frac{1}{\widetilde{\lambda}}\left(\frac{\widetilde{x}_{t_{0}+j-1}^{*}}{3}\right)^{k}
\geq\frac{(\widetilde{x}_{t_{0}+j-2}^{*})^{k^{2}}}{3^{k^{2}+k}\widetilde{\lambda}^{k+1}}
\geq...
\geq\frac{(\widetilde{x}_{t_{0}}^{*})^{k^{j}}}{3^{k+k^{2}+...+k^{j}}\widetilde{\lambda}^{1+k+...+k^{j-1}}}\\
\geq\left(\frac{\widetilde{\lambda}^{\frac{1}{k-1}}}{2}\right)^{k^{j}}\frac{1}{3^{\frac{k^{j+1}-k}{k-1}}\widetilde{\lambda}^{\frac{k^{j}-1}{k-1}}}
=\frac{\widetilde{\lambda}^{\frac{1}{k-1}}}{3^{\frac{k^{j+1}-k}{k-1}}2^{k^{j}}}
\geq\frac{\widetilde{\lambda}^{\frac{1}{k-1}}}{3^{k^{j+1}}2^{k^{j+1}}}
=\widetilde{\lambda}^{\frac{1}{k-1}}(6)^{-k^{j+1}}~.
\end{gather*}
\end{proof}

\begin{lemma} \label{lemma: minimizer norm bound}
$\sum_{i=1}^{\widetilde{T}}(\widetilde{x}_{i}^{*})^{2}
\leq\frac{3\gamma^{\frac{3k+1}{2k}}}{\widetilde{\lambda}^{\frac{3}{2}}}$~.
\end{lemma}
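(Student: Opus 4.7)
The idea is that since the minimizer coordinates are monotonically non-increasing and sum to $\gamma/\widetilde{\lambda}$, the squared norm is controlled by the product $\widetilde{x}^*_1 \cdot (\gamma/\widetilde{\lambda})$, and the upper bound on $\widetilde{x}^*_1$ already derived in the proof of \lemref{lemma: lower bound on minimizer coordinate decay} is exactly what is needed.

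First I would bound the sum of squares by the supremum times the sum. By \lemref{lemma: 4 part minimizer lemma} parts 1, 2 and 4,
\begin{equation*}
\sum_{i=1}^{\widetilde{T}}(\widetilde{x}_i^*)^2 \;\leq\; \widetilde{x}_1^* \sum_{i=1}^{\widetilde{T}} \widetilde{x}_i^* \;=\; \frac{\gamma}{\widetilde{\lambda}}\, \widetilde{x}_1^*~.
\end{equation*}
Next I would plug in the upper bound on $\widetilde{x}_1^*$ that was established as \eqref{originally 3} in the proof of \lemref{lemma: lower bound on minimizer coordinate decay}, namely
\begin{equation*}
\widetilde{x}_1^* \;\leq\; \gamma^{1/k} + \sqrt{\frac{2\gamma^{1+1/k}}{\widetilde{\lambda}}}~.
\end{equation*}
(If desired, this bound can be re-derived in two lines from \lemref{lemma: 4 part minimizer lemma}.3–4 by summing the recursion and applying the quadratic formula to the resulting inequality in $\widetilde{x}_1^*$.) Combining gives
\begin{equation*}
\sum_{i=1}^{\widetilde{T}}(\widetilde{x}_i^*)^2 \;\leq\; \frac{\gamma^{1+1/k}}{\widetilde{\lambda}} + \sqrt{2}\cdot\frac{\gamma^{(3k+1)/(2k)}}{\widetilde{\lambda}^{3/2}}~.
\end{equation*}

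Finally I would show that the second term dominates. The ratio of the second term to the first is $\sqrt{2}\,\gamma^{(k-1)/(2k)}/\widetilde{\lambda}^{1/2}$, which is at least $\sqrt{2}$ exactly under the standing hypothesis $\gamma \geq \widetilde{\lambda}^{k/(k-1)}$. Hence the first summand is itself bounded by $\gamma^{(3k+1)/(2k)}/\widetilde{\lambda}^{3/2}$, and the total is at most $(1+\sqrt{2})\,\gamma^{(3k+1)/(2k)}/\widetilde{\lambda}^{3/2} \leq 3\gamma^{(3k+1)/(2k)}/\widetilde{\lambda}^{3/2}$, as claimed. There is no real obstacle here beyond verifying the exponent arithmetic and confirming that the standing assumption on $\gamma$ already built into the setup gives the needed domination.
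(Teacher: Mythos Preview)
Your proposal is correct and follows essentially the same argument as the paper: bound $\sum_i (\widetilde{x}_i^*)^2$ by $\widetilde{x}_1^*\cdot(\gamma/\widetilde{\lambda})$ via \lemref{lemma: 4 part minimizer lemma}, insert the bound \eqref{originally 3} on $\widetilde{x}_1^*$, and use the standing assumption $\gamma\ge\widetilde{\lambda}^{k/(k-1)}$ to absorb the lower-order term into $(1+\sqrt{2})\gamma^{(3k+1)/(2k)}/\widetilde{\lambda}^{3/2}\le 3\gamma^{(3k+1)/(2k)}/\widetilde{\lambda}^{3/2}$.
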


\begin{proof}
First we use the simple inequality:
\begin{equation*}
\sum_{i=1}^{\widetilde{T}}\widetilde{x}_{i}^{2}
\leq(\max_{i\in[\widetilde{T}]}|\widetilde{x}_{i}^{*}|)\sum_{i=1}^{\widetilde{T}}|\widetilde{x}_{i}^{*}|
=\widetilde{x}_{1}^{*}\frac{\gamma}{\widetilde{\lambda}}~,
\end{equation*}
where the last equality follows from \lemref{lemma: 4 part minimizer lemma}. Furthermore, we can upper bound the right hand side using \eqref{originally 3}:
\begin{equation*}
\widetilde{x}_{1}^{*}\frac{\gamma}{\widetilde{\lambda}}
\leq\left(\gamma^{1/k}+\sqrt{\frac{2\gamma^{\frac{k+1}{k}}}{\widetilde{\lambda}}}\right)\frac{\gamma}{\widetilde{\lambda}}
=\left(1+\sqrt{\frac{2\gamma^{\frac{k-1}{k}}}{\widetilde{\lambda}}}\right)\frac{\gamma^{\frac{k+1}{k}}}{\widetilde{\lambda}}~.
\end{equation*}
Due to our assumption $\gamma\geq\widetilde{\lambda}^{\frac{k}{k-1}}\Longrightarrow1\leq\sqrt{\frac{\gamma^{\frac{k-1}{k}}}{\widetilde{\lambda}}}$ so we can further upper bound the last term by
$$
\left(\sqrt{\frac{\gamma^{\frac{k-1}{k}}}{\widetilde{\lambda}}}+\sqrt{\frac{2(\gamma)^{\frac{k-1}{k}}}{\widetilde{\lambda}}}\right)\frac{\gamma^{\frac{k+1}{k}}}{\widetilde{\lambda}}
\leq(1+\sqrt{2})\frac{\gamma^{\frac{3k+1}{2k}}}{\widetilde{\lambda}^{\frac{3}{2}}}~,
$$
which finishes the proof since $1+\sqrt{2}<3$.
\end{proof}

\subsection{Rotated and rescaled function} \label{lower proof 2}

We will now shift our focus to a “rotated and rescaled” version of $\widetilde{f}$. Formally, let $v_{1},...,v_{\widetilde{T}}\in\mathbb{R}^{d}$ be orthogonal unit vectors, and define:
\begin{gather*}
f_{\gamma,v_{1},...,v_{\widetilde{T}}}(x_{1},...,x_{d})=\frac{\mu_{k}}{k!2^{\frac{k+3}{2}}}\left(\overset{\widetilde{T}-1}{\underset{i=1}{\sum}}g\left(\langle v_{i}-v_{i+1},x\rangle\right)-\gamma\langle v_{1},x\rangle\right)+\frac{\lambda}{2}\|x\|^{2}~,\\
g(x)=\frac{1}{k+1}|x|^{k+1}~.
\end{gather*}
For the sake of notational simplicity, we will assume $\gamma,v_{1},...,v_{\widetilde{T}}$ are somehow fixed (we will describe later on how to choose them) and denote the function by $f$ (abbreviating the parameters' subscript).
Furthermore, we will assume the relation $\lambda=\frac{\mu_{k}\widetilde{\lambda}}{k!2^{\frac{k+3}{2}}}$ (where $\widetilde{\lambda}$ is needed in order to define $\widetilde{f}$ from the previous section). Denote $x^{*}=\underset{x\in\mathbb{R}^{d}}{\arg\min}f(x)$. In order to derive properties of $x^{*}$ (analogous to subsection \ref{subsec: lower proof 1}) we will connect between $x^{*}$ and $\tilde{x}^{*}$ (from subsection \ref{subsec: lower proof 1}) through the following lemma.

\begin{lemma} \label{lemma: rotation min 1}
$\forall i\in[\widetilde{T}]:\ \langle v_{i},x^{*}\rangle =\widetilde{x}_{i}^{*}$~.
\end{lemma}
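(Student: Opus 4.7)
The approach is a direct change of variables. Since $v_1,\dots,v_{\widetilde T}$ are orthonormal, I would extend them to an orthonormal basis of $\mathbb{R}^d$, and decompose any $x\in\mathbb{R}^d$ as $x=\sum_{i=1}^{\widetilde T} y_i v_i + x_\perp$, where $y_i:=\langle v_i,x\rangle$ and $x_\perp$ lies in the orthogonal complement of $\operatorname{span}(v_1,\dots,v_{\widetilde T})$. Orthonormality immediately gives $\|x\|^2=\sum_{i=1}^{\widetilde T} y_i^2 + \|x_\perp\|^2$, and for each $i$ we have $\langle v_i-v_{i+1},x\rangle = y_i-y_{i+1}$.

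Substituting into the definition of $f$ and using the parameter relation $\lambda=\frac{\mu_k\widetilde\lambda}{k!\,2^{(k+3)/2}}$ to absorb the strong-convexity factor into the same prefactor as the Taylor term, I would obtain an exact splitting
\begin{equation*}
f(x) \;=\; \frac{\mu_k}{k!\,2^{(k+3)/2}}\,h(y_1,\dots,y_{\widetilde T}) \;+\; \frac{\lambda}{2}\|x_\perp\|^2,
\end{equation*}
where
\begin{equation*}
h(y_1,\dots,y_{\widetilde T}) \;=\; \frac{1}{k+1}\sum_{i=1}^{\widetilde T-1}|y_i-y_{i+1}|^{k+1} \;-\; \gamma y_1 \;+\; \frac{\widetilde\lambda}{2}\sum_{i=1}^{\widetilde T} y_i^2.
\end{equation*}
Because $y_1,\dots,y_{\widetilde T},x_\perp$ are independent coordinates in an orthonormal decomposition, and because $h$ and $\|x_\perp\|^2$ depend on disjoint blocks of variables, the minimization of $f$ separates into minimizing $h$ over $(y_1,\dots,y_{\widetilde T})\in\mathbb{R}^{\widetilde T}$ and minimizing $\|x_\perp\|^2$ over the orthogonal complement. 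The latter is trivially solved at $x_\perp=0$.

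It remains to identify the minimizer of $h$ with the first $\widetilde T$ coordinates of $\widetilde x^*$. Recall that $\widetilde f$ on $\mathbb{R}^d$ depends nontrivially only on coordinates $1,\dots,\widetilde T$ through its Taylor/linear part, while its quadratic term $\frac{\widetilde\lambda}{2}\|x\|^2$ is separable; hence $\widetilde x^*_j=0$ for $j>\widetilde T$, and $(\widetilde x^*_1,\dots,\widetilde x^*_{\widetilde T})$ is the unique minimizer of $h$ by strict convexity of both $\widetilde f$ and $h$. Combining, the unique minimizer $x^*$ of $f$ satisfies $\langle v_i,x^*\rangle = \widetilde x^*_i$ for $i\in[\widetilde T]$, as claimed.

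There is no real obstacle here; the one subtlety worth stating carefully is that the prefactor matching $\lambda\leftrightarrow \widetilde\lambda$ must be applied to the entire $\|x\|^2$ term (not only its first $\widetilde T$ coordinates), which is what allows the $\|x_\perp\|^2$ piece to cleanly decouple and be killed at $x_\perp=0$.
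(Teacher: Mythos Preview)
Your proof is correct and takes essentially the same approach as the paper: both arguments perform the orthogonal change of variables $x\mapsto (\langle v_i,x\rangle)_i$ and use the relation $\lambda=\frac{\mu_k\widetilde\lambda}{k!2^{(k+3)/2}}$ to identify $f$ with a positive multiple of $\widetilde f$ in the new coordinates. The paper packages this as $f(x)=\widetilde C\,\widetilde f(Vx)$ for an orthogonal $V$ extending $(v_1,\dots,v_{\widetilde T})$, whereas you spell out the $y$--$x_\perp$ decomposition explicitly; the content is the same.
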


\begin{proof}
Since the global minimizer $\widetilde{x}^{*}$ is invariant under multiplication of the function $\widetilde{f}$ by a positive constant, we deduce that $\widetilde{x}^{*}$ also minimizes:
\begin{gather*}
\frac{\mu_{k}}{k!2^{\frac{k+3}{2}}}\widetilde{f}(x)=\frac{\mu_{k}}{k!2^{\frac{k+3}{2}}}\left(\overset{\widetilde{T}-1}{\underset{i=1}{\sum}}g(x_{i}-x_{i+1})-\gamma x_{1}+\frac{\widetilde{\lambda}}{2}\|x\|^{2}\right)\\
=\frac{\mu_{k}}{k!2^{\frac{k+3}{2}}}\left(\overset{\widetilde{T}-1}{\underset{i=1}{\sum}}g(x_{i}-x_{i+1})-\gamma x_{1}\right)+\frac{\lambda}{2}\|x\|^{2}~.
\end{gather*}
Now recall that $f(x)=\frac{\mu_{k}}{k!2^{\frac{k+3}{2}}}\left(\overset{\widetilde{T}-1}{\underset{i=1}{\sum}}g(\langle v_{i}-v_{i+1},x\rangle)-\gamma\langle v_{1},x\rangle\right)+\frac{\lambda}{2}\|x\|^{2}$ and that $v_{1},...,v_{\widetilde{T}}$ are orthogonal unit vectors. We deduce that $f(x)=\widetilde{C}\widetilde{f}(Vx)$ for some positive constant $\widetilde{C}$ and for any orthogonal $V$ such that it's first $\widetilde{T}$ columns are $v_{1},...,v_{\widetilde{T}}$. Therefore $x^{*}$ satisfies $Vx^{*}=(\langle v_{1},x^{*}\rangle,\langle v_{2},x^{*}\rangle,...)=\widetilde{x}^{*}$.
\end{proof}

\begin{lemma} \label{lemma: rotation min 2}
$\|x^*\|^{2}=\sum_{i=1}^{\widetilde{T}}\langle v_{i},x^{*}\rangle^{2}$~.
\end{lemma}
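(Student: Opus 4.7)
The statement is essentially a Parseval-type identity: $\|x^*\|^2 = \sum_{i=1}^{\widetilde{T}}\langle v_i, x^*\rangle^2$ holds for an orthonormal family $v_1,\ldots,v_{\widetilde{T}}$ if and only if $x^*$ lies in their span. So the plan is to prove the structural fact that $x^* \in \mathrm{span}(v_1,\ldots,v_{\widetilde{T}})$, and then appeal to Parseval on an extension of this family to an orthonormal basis.

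The approach is a straightforward ``orthogonal decomposition kills the penalty'' argument. Let $P$ denote the orthogonal projection onto $V := \mathrm{span}(v_1,\ldots,v_{\widetilde{T}})$ and write any $x \in \mathbb{R}^d$ as $x = Px + P^\perp x$. The key observation is that $f(x)$ interacts with $x$ only through (i) inner products with the $v_i$'s and (ii) the norm $\|x\|^2$. For (i), since each $v_i \in V$, we have $\langle v_i, x\rangle = \langle v_i, Px\rangle$, so the entire nonlinear-plus-linear part
\[
\frac{\mu_k}{k!\,2^{(k+3)/2}}\left(\sum_{i=1}^{\widetilde{T}-1} g(\langle v_i - v_{i+1}, x\rangle) - \gamma \langle v_1, x\rangle\right)
\]
depends on $x$ only through $Px$. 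For (ii), orthogonality gives $\|x\|^2 = \|Px\|^2 + \|P^\perp x\|^2$. Putting these together,
\[
f(x) = f(Px) + \frac{\lambda}{2}\|P^\perp x\|^2.
\]

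Since $\lambda > 0$ and $x^*$ is the unique minimizer, this identity forces $P^\perp x^* = 0$: otherwise $f(Px^*) < f(x^*)$, contradicting optimality. Therefore $x^* \in V$. Extending $v_1,\ldots,v_{\widetilde{T}}$ to an orthonormal basis $v_1,\ldots,v_d$ of $\mathbb{R}^d$, Parseval's identity yields $\|x^*\|^2 = \sum_{i=1}^{d} \langle v_i, x^*\rangle^2$, and all terms with $i > \widetilde{T}$ vanish because $x^* \in V$, giving the claim.

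There is no real obstacle here; the only small care needed is to keep track that $Px \in V$ implies all the inner products $\langle v_i - v_{i+1}, x\rangle$ coincide with $\langle v_i - v_{i+1}, Px\rangle$, and that the uniqueness of the minimizer (guaranteed by $\lambda$-strong convexity of $f$, exactly as in Subsection~\ref{subsec: lower proof 1} for $\widetilde{f}$) is what turns ``$f(x) \geq f(Px)$'' into ``$x^* = Px^*$.''
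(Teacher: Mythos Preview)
Your proof is correct and is essentially the same approach as the paper's, which dispatches the lemma in one line by invoking ``the representer theorem and the Pythagorean theorem.'' You have simply unpacked the representer-theorem step explicitly via the orthogonal decomposition $f(x) = f(Px) + \tfrac{\lambda}{2}\|P^\perp x\|^2$, which is exactly how that theorem is proved in this setting.
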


\begin{proof}
The proof follows immediately by combining the representer theorem and the Pythagorean theorem.
\end{proof}
Combining the previous lemmas with subsection \ref{subsec: lower proof 1} results in an immediate consequence:

\begin{proposition} \label{minimizer proposition}
Suppose that 
\begin{equation} \label{gamma bound}
\gamma>\max\left\{\left(\frac{k!2^{\frac{k+3}{2}}\lambda}{\mu_{k}}\right)^{\frac{k}{k-1}},12^{\frac{2k}{k+1}}\left(\frac{k!2^{\frac{k+3}{2}}\lambda}{\mu_{k}}\right)^{\frac{2k}{2k-1}}\right\},\  \widetilde{T}\geq\frac{4\gamma}{\left(\frac{k!2^{\frac{k+3}{2}}\lambda}{\mu_{k}}\right)^{\frac{k}{k-1}}}~.
\end{equation}
Then $f$ has a unique minimizer $x^{*}$ which satisfies:
\begin{enumerate}
    \item
    $\forall t\in[\widetilde{T}]:\langle v_{t},x^{*}\rangle\geq max\left\{0,\frac{\gamma^{\frac{k+1}{2k}}}{12\sqrt{\frac{k!2^{\frac{k+3}{2}}\lambda}{\mu_{k}}}}+(\frac{1}{2}-t)\gamma^{1/k}\right\}$~.
    
    \item
    There exists an index $t_{0}\leq\frac{\widetilde{T}}{2}$ such that for all $j\in[\widetilde{T}-t_{0}]$: 
    \begin{equation*}
       \langle v_{t_{0}+j},x^{*}\rangle\geq\left(\frac{k!2^{\frac{k+3}{2}}\lambda}{\mu_{k}}\right)^{\frac{1}{k-1}}(6)^{-k^{j+1}} ~.
    \end{equation*}
    
    \item 
    \begin{equation} \label{D bound}
        \|x^{*}\|^{2}\leq\frac{3\gamma^{\frac{3k+1}{2k}}}{\left(\frac{k!2^{\frac{k+3}{2}}\lambda}{\mu_{k}}\right)^{\frac{3}{2}}}~.
    \end{equation}
    \end{enumerate}
\end{proposition}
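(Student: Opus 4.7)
The strategy is to transfer each of the previously established properties of the minimizer $\widetilde{x}^{*}$ of $\widetilde{f}$ over to $x^{*}$ via the orthogonal change of variables given in \lemref{lemma: rotation min 1}. Since the heavy lifting --- the coordinate decay bound, the existence of a gap-creating index $t_0$, and the sum-of-squares bound --- has already been carried out in Subsection~\ref{subsec: lower proof 1}, the remaining task is essentially bookkeeping: checking that the hypotheses translate correctly under the rescaling $\widetilde{\lambda} = \frac{k!2^{(k+3)/2}\lambda}{\mu_{k}}$, and then invoking each lemma in turn.

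First I would verify the hypothesis translation. The assumptions imposed in Subsection~\ref{subsec: lower proof 1} were $\gamma \geq \max\{\widetilde{\lambda}^{k/(k-1)},\, 12^{2k/(k+1)}\widetilde{\lambda}^{2k/(2k-1)}\}$ and $\widetilde{T} \geq 4\gamma/\widetilde{\lambda}^{k/(k-1)}$. Substituting $\widetilde{\lambda} = \frac{k!2^{(k+3)/2}\lambda}{\mu_{k}}$ produces exactly the bounds in \eqref{gamma bound}. This guarantees the applicability of all the lemmas from Subsection~\ref{subsec: lower proof 1}, and in particular the existence and uniqueness of $\widetilde{x}^{*}$. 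Strong convexity of $f$ with parameter $\lambda$ likewise yields the uniqueness of $x^{*}$.

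Next I would invoke the translation lemma: by \lemref{lemma: rotation min 1}, $\langle v_i, x^{*}\rangle = \widetilde{x}_i^{*}$ for every $i \in [\widetilde{T}]$. Part~1 of the proposition then follows immediately from \lemref{lemma: lower bound on minimizer coordinate decay} applied to $\widetilde{x}^{*}$. Part~2 follows from the unlabeled lemma preceding \lemref{lemma: minimizer norm bound} (existence of $t_0 \leq \widetilde{T}/2$ with $\widetilde{x}_{t_0+j}^{*} \geq \widetilde{\lambda}^{1/(k-1)}\, 6^{-k^{j+1}}$), once again applied to $\widetilde{x}^{*}$ and with $\widetilde{\lambda}^{1/(k-1)}$ rewritten as $\left(\frac{k!2^{(k+3)/2}\lambda}{\mu_{k}}\right)^{1/(k-1)}$. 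For part~3, \lemref{lemma: rotation min 2} gives $\|x^{*}\|^{2} = \sum_{i=1}^{\widetilde{T}}\langle v_i, x^{*}\rangle^{2} = \sum_{i=1}^{\widetilde{T}}(\widetilde{x}_i^{*})^{2}$, and \lemref{lemma: minimizer norm bound} bounds this sum by $3\gamma^{(3k+1)/(2k)}/\widetilde{\lambda}^{3/2}$, which after the same substitution is exactly \eqref{D bound}.

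There is no substantive technical obstacle here; the proposition is essentially a corollary packaging the earlier lemmas under the change of coordinates and parameter rescaling. The only minor point worth checking is that the positive constant $\widetilde{C} = \mu_{k}/(k!2^{(k+3)/2})$ appearing in the relation $f(x) = \widetilde{C}\,\widetilde{f}(Vx)$ (from the proof of \lemref{lemma: rotation min 1}) does not affect the location of the minimizer, which is immediate since minimizers are invariant under scaling by positive constants.
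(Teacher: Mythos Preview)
Your proposal is correct and matches the paper's approach exactly: the paper states the proposition as an immediate consequence of combining Lemmas~\ref{lemma: rotation min 1} and~\ref{lemma: rotation min 2} with the lemmas of Subsection~\ref{subsec: lower proof 1}, under the substitution $\widetilde{\lambda} = \frac{k!2^{(k+3)/2}\lambda}{\mu_k}$. If anything, you have spelled out more of the bookkeeping than the paper itself does.
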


\subsection{Oracle complexity} \label{subsec: lower oracle complexity}
We will now derive an oracle complexity lower bound using the function we introduced in the previous section, with a particular choice of $v_{1},...,v_{\widetilde{T}}$ which will depend on the optimization algorithm. Both our function and our lower bound will still depend on the choice of $\gamma$, which we will pick in the next section. We start with a simple yet crucial lemma:

\begin{lemma} \label{lemma: orthogonal information}
If $x\in\mathbb{R}^{d}$ is orthogonal to $v_{t},v_{t+1},...,v_{\widetilde{T}}$, then $f(x),\nabla f(x),...,\nabla^{k}f(x)$ do not depend on $v_{t+1},v_{t+2},...,v_{\widetilde{T}}$.
\end{lemma}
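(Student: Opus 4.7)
The plan is to prove this lemma by an explicit computation of the derivatives $\nabla^j f(x)$ for $1 \leq j \leq k$, combined with the key observation that $g^{(j)}(0) = 0$ for all $1 \leq j \leq k$, where $g(y) = \frac{1}{k+1}|y|^{k+1}$.

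First I would compute the derivatives of $f$ using the chain rule for compositions of the form $g(\langle w, x\rangle)$. For any fixed vector $w \in \mathbb{R}^d$, one has $\nabla^j[g(\langle w, x\rangle)] = g^{(j)}(\langle w, x\rangle)\, w^{\otimes j}$. Applying this to each summand of $f$ with $w = v_i - v_{i+1}$, and noting that the remaining pieces of $f$ (namely $-\gamma\langle v_1, x\rangle$ and $\frac{\lambda}{2}\|x\|^2$) contribute at most a term involving $v_1$, $\lambda x$, or $\lambda I$, we obtain, for every $1 \leq j \leq k$:
\begin{equation*}
\nabla^{j} f(x) \;=\; \frac{\mu_k}{k!\,2^{(k+3)/2}} \sum_{i=1}^{\widetilde{T}-1} g^{(j)}\!\left(\langle v_i - v_{i+1}, x\rangle\right) (v_i - v_{i+1})^{\otimes j} \;+\; R_j(x),
\end{equation*}
where $R_j(x)$ depends only on $v_1$ (and on $x$, $\lambda$) when $j=1$, and is independent of all $v_i$'s otherwise.

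Next I would verify the key scalar fact: since $g(y) = \frac{1}{k+1}|y|^{k+1}$, a direct computation (splitting into the cases $y \geq 0$ and $y \leq 0$) shows $g^{(j)}(y) = c_j\,\mathrm{sign}(y)^j |y|^{k+1-j}$ for appropriate constants $c_j$, so in particular $g^{(j)}(0) = 0$ for every $1 \leq j \leq k$, and similarly $g(0) = 0$.

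With this in hand I would split the sum over $i$ into three regimes. For $i \geq t$, both $v_i$ and $v_{i+1}$ lie in the orthogonal set, so $\langle v_i - v_{i+1}, x\rangle = 0$ and hence the entire summand vanishes (using $g^{(j)}(0) = 0$ for the derivatives, and $g(0) = 0$ for $f$ itself). For $i = t-1$, we have $\langle v_{t-1} - v_t, x\rangle = \langle v_{t-1}, x\rangle$, and the tensor $(v_{t-1} - v_t)^{\otimes j}$ introduces $v_t$ but no higher-index vectors. For $i \leq t-2$, the summand involves only $v_i, v_{i+1}$ with $i+1 \leq t-1$. Thus no summand depends on $v_{t+1}, \ldots, v_{\widetilde{T}}$, and the same holds for $f(x)$ itself (whose $i = t-1$ summand reduces to $g(\langle v_{t-1}, x\rangle)$, free of $v_t$).

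The proof is mostly bookkeeping; the only non-routine step is the scalar verification that $g^{(j)}(0) = 0$ for all $j \leq k$, which justifies discarding the tail summands. No further obstacle is anticipated.
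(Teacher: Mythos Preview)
Your proposal is correct and follows essentially the same approach as the paper: apply the chain rule to each summand and use that $g^{(j)}(0)=0$ for all $1\le j\le k$ to kill the terms with index $i\ge t$. The paper's proof is a one-sentence sketch of exactly this argument; you have simply written out the bookkeeping (the three index regimes and the form of the remainder $R_j$) in more detail.
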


\begin{proof}
The proof follows from looking at $f(x)-\frac{\lambda}{2}\|x\|^{2}$, and applying the chain rule while noticing that $\forall i\in[k]:g^{(i)}(0)=0$.
\end{proof}

This observation allows us to provide the following oracle complexity lower bound:

\begin{proposition} \label{proposition: T lower bound}
Assume that
\begin{equation} \label{epsilon bound}
    \epsilon<\min\left\{\frac{2^{\frac{4}{k-1}}(k!)^{\frac{2}{k-1}}(\lambda)^{\frac{k+1}{k-1}}}{(\mu_{k})^{\frac{2}{k-1}}},\frac{\lambda\gamma^{\frac{2}{k}}}{8}\right\}~.
\end{equation}
Assuming \eqref{gamma bound}, it is possible to choose the vectors $v_{1},...,v_{\widetilde{T}}$ (in the function $f$) such that the number of iterations $T$ required to have $f(x^{T})-f(x^{*})\leq\epsilon$ is at least
\begin{equation} \label{T bound}
    \max\left\{\log_{k}\log_{6}\left(\frac{2^{\frac{2}{k-1}}(k!)^{\frac{1}{k-1}}(\lambda)^{\frac{k+1}{2(k-1)}}}{(\mu_{k})^{\frac{1}{k-1}}\sqrt{\epsilon}}\right)-1,\frac{\gamma^{\frac{k-1}{2k}}}{12\left(\frac{k!2^{\frac{k+3}{2}}\lambda}{\mu_{k}}\right)^{\frac{1}{2}}}\right\}~.
\end{equation}
\end{proposition}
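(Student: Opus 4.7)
I will adapt the adversarial construction of Nesterov and \citet{arjevani2019oracle} to the $k$-th order oracle setting, in two distinct pieces: first a lazy choice of $v_{1},\ldots,v_{\widetilde{T}}$ that confines the algorithm's iterates to a ``small'' subspace, then a case analysis converting the conclusions of Proposition~\ref{minimizer proposition} into the two lower bounds in~\eqref{T bound}. The lazy adversary maintains throughout the invariant
\begin{equation*}
\langle v_{s},x^{j}\rangle=0\qquad\text{for every } 1\le j\le t \text{ and every } s>j.
\end{equation*}
Initialize $v_{1}$ as any unit vector; given queries $x^{1},\ldots,x^{t}$ and vectors $v_{1},\ldots,v_{t}$ already satisfying the invariant, \emph{before} answering the oracle query at $x^{t}$ the adversary picks $v_{t+1}$ to be any unit vector orthogonal to $\{v_{1},\ldots,v_{t}\}\cup\{x^{1},\ldots,x^{t}\}$ (possible whenever $d$ is sufficiently large, matching the high-dimensional regime in which the theorem is stated). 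Consistency of this lazy commitment --- namely that $v_{t+2},\ldots,v_{\widetilde{T}}$ may be chosen arbitrarily later without altering the response at $x^{t}$ --- is precisely what \lemref{lemma: orthogonal information} (with parameter $t+1$) guarantees: since $x^{t}\perp v_{t+1},\ldots,v_{\widetilde{T}}$, the oracle output at $x^{t}$ depends only on $v_{1},\ldots,v_{t+1}$, all already committed. Induction on $t$ preserves the invariant.

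\textbf{Strong-convexity squeeze.} At the end of round $T$, the invariant gives $\langle v_{s},x^{T}\rangle=0$ for every $s>T$, so strong convexity together with orthonormality of $v_{1},\ldots,v_{\widetilde{T}}$ yields
\begin{equation*}
f(x^{T})-f(x^{*})\;\ge\;\frac{\lambda}{2}\|x^{T}-x^{*}\|^{2}\;\ge\;\frac{\lambda}{2}\sum_{s=T+1}^{\widetilde{T}}\langle v_{s},x^{*}\rangle^{2}\;\ge\;\frac{\lambda}{2}\langle v_{T+1},x^{*}\rangle^{2}.
\end{equation*}
Thus $f(x^{T})-f(x^{*})\le\epsilon$ forces $\langle v_{T+1},x^{*}\rangle\le\sqrt{2\epsilon/\lambda}$, and matters reduce to lower-bounding $\langle v_{T+1},x^{*}\rangle$ in two complementary regimes.

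\textbf{The two bounds.} For the polynomial term, I invoke Proposition~\ref{minimizer proposition}.1 at index $T+1$: a direct calculation shows that if $T+1\le\gamma^{(k-1)/(2k)}/(12\sqrt{k!2^{(k+3)/2}\lambda/\mu_{k}})$ then $\langle v_{T+1},x^{*}\rangle\ge\gamma^{1/k}/2$, so $f(x^{T})-f(x^{*})\ge\lambda\gamma^{2/k}/8$, contradicting the clause $\epsilon<\lambda\gamma^{2/k}/8$ of~\eqref{epsilon bound}. For the doubly-logarithmic term, I apply Proposition~\ref{minimizer proposition}.2 with $j:=T+1-t_{0}\ge0$, giving
\begin{equation*}
\langle v_{T+1},x^{*}\rangle\;\ge\;\left(\frac{k!2^{(k+3)/2}\lambda}{\mu_{k}}\right)^{1/(k-1)}\cdot 6^{-k^{T+2-t_{0}}};
\end{equation*}
squaring, multiplying by $\lambda/2$, and taking two successive logarithms yields $T+2-t_{0}\ge\log_{k}\log_{6}(\cdots)$, hence $T\ge\log_{k}\log_{6}(\cdots)-1$ using $t_{0}\ge 1$. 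The leftover case $T+1<t_{0}$ is handled by applying Proposition~\ref{minimizer proposition}.2 at $j=0$ to the term $\langle v_{t_{0}},x^{*}\rangle^{2}$ still present in the sum: the first clause of~\eqref{epsilon bound} then forces $\log_{k}\log_{6}(\cdots)\le 1$, trivializing the target bound (since $T\ge 1$).

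\textbf{Main obstacle.} The principal hurdle is the lazy construction in Step 1 --- verifying rigorously, via \lemref{lemma: orthogonal information}, that the adversary may defer the commitment to $v_{t+1}$ until after observing $x^{t}$ without retroactively altering any earlier oracle answer, and that the invariant truly propagates despite $x^{j}$ possibly acquiring nonzero components along $v_{j+1},\ldots,v_{t}$ at later rounds (it does, because each future $v_{s}$ will be chosen orthogonal to all past queries including $x^{j}$). Beyond that, the work is largely bookkeeping: tracking the exponents in Step 3 carefully enough to recover the exact form of~\eqref{T bound}, in particular the additive $-1$ correction in the logarithmic term.
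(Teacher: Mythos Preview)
Your approach is essentially the paper's: an adversarial choice of the $v_i$'s that confines iterates to a growing subspace, followed by strong convexity plus Proposition~\ref{minimizer proposition} to extract the two lower bounds. Two small discrepancies are worth noting.

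\textbf{Index shift in the construction.} The paper picks $v_t$ \emph{after} seeing $x^t$ and orthogonal to $x^1,\ldots,x^t$ (so already $v_1\perp x^1$), which yields $\langle v_T,x^T\rangle=0$ and hence lets it invoke Proposition~\ref{minimizer proposition}.1 at index $T$. Your construction (with $v_1$ arbitrary and $v_{t+1}\perp x^1,\ldots,x^t$) only guarantees $\langle v_{T+1},x^T\rangle=0$, so you must use index $T+1$. This costs exactly one unit: your contradiction argument gives $T+1>\frac{\gamma^{(k-1)/(2k)}}{12\sqrt{k!2^{(k+3)/2}\lambda/\mu_k}}$, i.e.\ $T>B-1$, not the stated $T\ge B$. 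The paper recovers the full constant by working with $v_T$ and keeping the residual $-\sqrt{2\epsilon}/(\gamma^{1/k}\sqrt{\lambda})+\tfrac12$, which is positive precisely under the second clause of~\eqref{epsilon bound}.

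\textbf{Log-log term.} The paper applies Proposition~\ref{minimizer proposition}.2 at the index $t_0+T$ (always $\ge t_0$), so no edge case arises and one directly gets $k^{T+1}\ge\log_6(\cdots)$. Your route via $v_{T+1}$ forces the separate case $T+1<t_0$; your handling of it is basically right, but the sentence ``the first clause of~\eqref{epsilon bound} then forces $\log_k\log_6(\cdots)\le 1$'' is imprecise. What actually happens is that $f(x^T)-f(x^*)\le\epsilon$ together with Proposition~\ref{minimizer proposition}.2 at $j=0$ gives $A\le 6^{k}$, hence $\log_6 A\le k$; the first clause of~\eqref{epsilon bound} is only needed to ensure $A>1$ so that $\log_k\log_6 A$ is well-defined.
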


\begin{proof}
Fix the number of iterations $T\leq\widetilde{T}$. Given some algorithm, we will describe a method of picking $v_{1},...,v_{\widetilde{T}}$ in such a way that will result in a the desired lower bound.

\begin{itemize}
    \item
    First, compute $x^{1}$ (which is the deterministic algorithm's first output, thus chosen without any oracle calls are even made).
    
    \item
    Pick $v_{1}$ to be some unit vector orthogonal to $x^{1}$. Assuming $v_{2},...,v_{\widetilde{T}}$ will also be orthogonal to $x^{1}$ (which we will soon ensure by construction), we have by \lemref{lemma: orthogonal information} that $f(x^{1}),...,\nabla^{k}f(x^{1})$ do not depend on $v_{2},...,v_{\widetilde{T}}$, thus depend only on $v_{1}$ which is already fixed. Since the algorithm is deterministic, this fixes the next point $x^{2}$.
    
    \item
    Repeat the same process for $t=2,3,...,T-1$: Compute $x^{t}$, and pick $v_{t}$ to be some unit vector orthogonal to $x^{1},...,x^{t}$ and also to $v_{1},...,v_{t-1}$ (recall the dimension is assumed to be large enough). Similar to the previous step, \lemref{lemma: orthogonal information} insures the next point $x^{t+1}$ is fixed.
    
    \item
     At the end of the process, pick $v_{T},...,v_{\widetilde{T}}$ to be some unit vectors which are orthogonal to all previously chosen $v$'s as well as $x^{1},...,x^{T}$ (which is possible due to the large dimension).

\end{itemize}

We get by this construction (where $t_{0}$ is defined in proposition \ref{minimizer proposition}):
\begin{equation*}
\|x^{T}-x^{*}\|^{2}
\geq\sum_{i=1}^{\widetilde{T}}\langle v_{i},x_{T}-x^{*}\rangle^{2}
\geq\langle v_{t_{0}+T},x_{T}-x^{*}\rangle^{2}
=\langle v_{t_{0}+T},x^{*}\rangle^{2}~,
\end{equation*}
which can be lower bounded using proposition \ref{minimizer proposition} by $\left(\frac{k!2^{\frac{k+3}{2}}\lambda}{\mu_{k}}\right)^{\frac{2}{k-1}}(6)^{-2k^{T+1}}$. Using the strong convexity of $f$, we get:
\begin{equation*}
f(x^{T})-f(x^{*})
\geq\frac{\lambda}{2}\|x^{T}-x^{*}\|^{2}
\geq\frac{\lambda}{2}
\left(\frac{k!2^{\frac{k+3}{2}}\lambda}{\mu_{k}}\right)^{\frac{2}{k-1}}(6)^{-2k^{T+1}}~.
\end{equation*}
In order to make the last expression smaller than $\epsilon$, $T$ must satisfy:
\begin{equation*}
\frac{\lambda}{2}\left(\frac{k!2^{\frac{k+3}{2}}\lambda}{\mu_{k}}\right)^{\frac{2}{k-1}}(6)^{-2k^{T+1}}\leq\epsilon
\iff
k^{T+1}\geq\log_{6}\left(\frac{2^{\frac{2}{k-1}}(k!)^{\frac{1}{k-1}}(\lambda)^{\frac{k+1}{2(k-1)}}}{(\mu_{k})^{\frac{1}{k-1}}\sqrt{\epsilon}}\right)~.
\end{equation*}
Since we assume $\epsilon<\frac{2^{\frac{4}{k-1}}(k!)^{\frac{2}{k-1}}(\lambda)^{\frac{k+1}{k-1}}}{(\mu_{k})^{\frac{2}{k-1}}}$ the expression inside the logarithm is greater than $1$, so we get
\begin{equation*}
T\geq\log_{k}\log_{6}\left(\frac{2^{\frac{2}{k-1}}(k!)^{\frac{1}{k-1}}(\lambda)^{\frac{k+1}{2(k-1)}}}{(\mu_{k})^{\frac{1}{k-1}}\sqrt{\epsilon}}\right)-1~,
\end{equation*}
which is the first part of the lower bound. For the second part of the lower bound, notice that:
\begin{gather*}
\|x^{T}-x^{*}\|^{2}
\geq\sum_{i=1}^{T}\langle v_{i},x_{T}-x^{*}\rangle^{2}
\geq\langle v_{T},x_{T}-x^{*}\rangle^{2}=\langle v_{T},x^{*}\rangle^{2}\\
\Longrightarrow f(x^{T})-f(x^{*})
\geq\frac{\lambda}{2}\|x^{T}-x^{*}\|^{2}
\geq\frac{\lambda}{2}\langle v_{T},x^{*}\rangle^{2}\\
\overset{prop. \ref{minimizer proposition}}{\geq}\frac{\lambda}{2}\left(\frac{\gamma^{\frac{k+1}{2k}}}{12\sqrt{\frac{k!2^{\frac{k+3}{2}}\lambda}{\mu_{k}}}}+(\frac{1}{2}-T)\gamma^{1/k}\right)^2~.
\end{gather*}
In order to make the right hand side smaller than $\epsilon$, $T$ must satisfy:
\begin{equation*}
\frac{\lambda}{2}\left(\frac{\gamma^{\frac{k+1}{2k}}}{12\sqrt{\frac{k!2^{\frac{k+3}{2}}\lambda}{\mu_{k}}}}+(\frac{1}{2}-T)\gamma^{1/k}\right)^2\leq\epsilon
\iff
T\geq\frac{\gamma^{\frac{k-1}{2k}}}{12\left(\frac{k!2^{\frac{k+3}{2}}\lambda}{\mu_{k}}\right)^{\frac{1}{2}}}-\frac{\sqrt{2\epsilon}}{\gamma^{\frac{1}{k}}\sqrt{\lambda}}+\frac{1}{2}~,
\end{equation*}
and since we assumed $\epsilon<\frac{\lambda\gamma^{\frac{2}{k}}}{8}$, we have that $-\frac{\sqrt{2\epsilon}}{\gamma^{\frac{1}{k}}\sqrt{\lambda}}+\frac{1}{2}>0$, finishing the proof of the proposition.
\end{proof}

\subsection{Setting $\gamma$ and wrapping up} \label{lower proof 4}

We can assume without loss of generality that the algorithm initializes at $x^{1}=0$ (otherwise, simply replace $f(x)$ by $f(x-x^{1})$). Thus, the theorem requires that our function's minimizer $x^{*}$ will satisfy $\|x^{*}\|\leq D$. We will soon establish this using previous sections, but first we'll show that $f$ indeed has the structural properties we desire regardless of the choice of $\gamma$.

\begin{lemma}
$f$ is $\lambda$-strongly convex, $k$ times differentiable, with $\mu_{k}$-Lipschitz k-th derivative.
\end{lemma}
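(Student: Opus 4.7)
My plan is to verify the three assertions separately, in order of increasing substance.

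Strong convexity is free from the decomposition of $f$: each summand $g(\langle v_i-v_{i+1},x\rangle)$ is convex since $g(t)=|t|^{k+1}/(k+1)$ has nonnegative second derivative $k|t|^{k-1}$ and this is then composed with an affine map; the term $-\gamma\langle v_1,x\rangle$ is linear, hence convex; and $\frac{\lambda}{2}\|x\|^2$ is $\lambda$-strongly convex. The sum of convex functions with one $\lambda$-strongly convex summand is $\lambda$-strongly convex.

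For $C^k$ regularity, I would give a short induction showing that $g$ is $C^k$ on $\mathbb{R}$: the $j$-th derivative for $j\leq k$ has the form $c_j|t|^{k+1-j}$ (possibly multiplied by $\mathrm{sign}(t)$), which is continuous since $k+1-j\geq 1$. In particular, $g^{(k)}(t)$ equals $k!\cdot t$ or $k!\cdot|t|$ depending on the parity of $k$, so $g^{(k)}$ is globally $k!$-Lipschitz on $\mathbb{R}$. Composition with affine maps and summing then show $f\in C^k(\mathbb{R}^d)$.

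The Lipschitz bound on $\nabla^k f$ is the main content. By the chain rule, the $i$-th summand contributes $g^{(k)}(\langle v_i-v_{i+1},x\rangle)(v_i-v_{i+1})^{\otimes k}$ to $\nabla^k$ (a rank-one symmetric tensor in the direction $v_i-v_{i+1}$); the linear term contributes zero for $k\geq 2$, and the quadratic term contributes a constant for $k=2$ and zero for $k\geq 3$, so neither affects the Lipschitz constant. Using $k!$-Lipschitzness of $g^{(k)}$, for any unit vector $u$ and any $x,y$,
\begin{equation*}
\left|\bigl(\nabla^k f(x)-\nabla^k f(y)\bigr)[u^{\otimes k}]\right|\leq \frac{\mu_k}{k!\,2^{(k+3)/2}}\cdot k!\cdot\sum_{i=1}^{\widetilde{T}-1}|\langle v_i-v_{i+1},x-y\rangle|\cdot|\langle v_i-v_{i+1},u\rangle|^k.
\end{equation*}

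The remaining task, and the step I expect to be the main obstacle, is to bound the right-hand sum by a constant (independent of $\widetilde T$) times $\|x-y\|$, which is where the orthonormality of $v_1,\ldots,v_{\widetilde T}$ enters essentially. Setting $\alpha_i=\langle v_i,u\rangle$ and $\beta_i=\langle v_i,x-y\rangle$, orthonormality gives $\sum\alpha_i^2\leq 1$ and $\sum\beta_i^2\leq\|x-y\|^2$. A path-Laplacian estimate then yields $\sum(\beta_i-\beta_{i+1})^2\leq 4\|x-y\|^2$, and from $|\alpha_i-\alpha_{i+1}|\leq 2$ combined with the same inequality for $\alpha$ we get $\sum(\alpha_i-\alpha_{i+1})^{2k}\leq 2^{2k-2}\cdot 4=2^{2k}$. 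Cauchy--Schwarz then bounds the inner sum by $2\|x-y\|\cdot 2^k$, and collecting the remaining constants against the normalization $1/(k!\,2^{(k+3)/2})$ concludes that $\nabla^k f$ is $\mu_k$-Lipschitz. The rest is routine bookkeeping; no new idea is needed beyond this Cauchy--Schwarz estimate that exploits the ``path'' structure of the differences $v_i-v_{i+1}$.
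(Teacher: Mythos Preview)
Your approach is essentially the paper's: both reduce the Lipschitz claim to bounding
\[
\sum_{i}|\langle v_i-v_{i+1},x-y\rangle|\,|\langle v_i-v_{i+1},u\rangle|^{k}
\]
for a unit vector $u$, using the ``path-Laplacian'' estimate $\sum_i(\alpha_i-\alpha_{i+1})^2\leq 4\sum_i\alpha_i^2$. The paper's last step differs only cosmetically: instead of Cauchy--Schwarz it bounds $|\langle (v_i-v_{i+1})/\sqrt{2},x-y\rangle|\leq\|x-y\|$ termwise and then uses $|\langle (v_i-v_{i+1})/\sqrt{2},u\rangle|^{k}\leq|\langle (v_i-v_{i+1})/\sqrt{2},u\rangle|^{2}$ before summing. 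Your observation that $g^{(k)}(t)$ is either $k!\,t$ or $k!\,|t|$ (depending on parity) is actually more precise than the paper, which just writes $k!|x|$; either way $g^{(k)}$ is $k!$-Lipschitz, which is all that is used.

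There is, however, an arithmetic slip in your constants. With $|\alpha_i-\alpha_{i+1}|\leq 2$ you get $\sum(\alpha_i-\alpha_{i+1})^{2k}\leq 2^{2k}$, hence the inner sum is at most $2^{k+1}\|x-y\|$, and after dividing by $2^{(k+3)/2}$ you end up with a Lipschitz constant $\mu_k\cdot 2^{(k-1)/2}$, which overshoots for every $k\geq 2$. The repair is immediate and already implicit in the orthonormality you invoke: since $\|v_i-v_{i+1}\|=\sqrt{2}$, in fact $|\alpha_i-\alpha_{i+1}|\leq\sqrt{2}$, so $\sum(\alpha_i-\alpha_{i+1})^{2k}\leq 2^{k-1}\cdot 4=2^{k+1}$, Cauchy--Schwarz gives the inner sum $\leq 2\|x-y\|\cdot 2^{(k+1)/2}=2^{(k+3)/2}\|x-y\|$, and the normalization then yields exactly $\mu_k$. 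With this fix your argument is complete and coincides with the paper's.
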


\begin{proof}
$f$ is $\lambda$-strongly convex as a sum of convex functions and the $\lambda$-strongly convex $\frac{\lambda}{2}\|x\|^{2}$, and is easily verified to be $k$ times smooth. It remains to show the desired Lipschitz property. Since Lipschitzness is invariant under orthogonal transformations, we may assume without loss of generality that
\begin{equation*}
v_{i}=\mathbf{e}_{i}\Longrightarrow
f(x)=\frac{\mu_{k}}{k!2^{\frac{k+3}{2}}}\left(\overset{\widetilde{T}-1}{\underset{i=1}{\sum}}g(\langle\mathbf{e}_{i}-\mathbf{e}_{i+1},x\rangle)-\gamma x_{1}\right)+\frac{\lambda}{2}\|x\|^{2}~.
\end{equation*}
Denote $\mathbf{r}_{i}=\mathbf{e}_{i}-\mathbf{e}_{i+1}$, and assume for simplicity that $k>2$ (the case $k=2$ can be verified in a similar manner). The chain rule implies
\begin{equation*}
\nabla^{(k)}f(x)=\frac{\mu_{k}}{k!2^{\frac{k+3}{2}}}\left(\overset{\widetilde{T}-1}{\underset{i=1}{\sum}}g^{(k)}(\langle\mathbf{r}_{i},x\rangle)\mathbf{r}_{i}^{\otimes k}\right)~,
\end{equation*}
where $v^{\otimes k}=\overset{k}{\overbrace{v\otimes v\otimes...\otimes v}}$. Since $g^{(k)}(x)=k!|x|$ we have
\begin{align*}
\|\nabla^{(k)}f(x)-\nabla^{(k)}f(\bar{x})\|
&=\frac{\mu_{k}k!}{k!2^{\frac{k+3}{2}}}\left\|\overset{\widetilde{T}-1}{\underset{i=1}{\sum}}(|\langle\mathbf{r}_{i},x\rangle|-|\langle\mathbf{r}_{i},\bar{x}\rangle|)\mathbf{r}_{i}^{\otimes k}\right\| \\
&\leq\frac{\mu_{k}}{2^{\frac{k+3}{2}}}\left\|\overset{\widetilde{T}-1}{\underset{i=1}{\sum}}(|\langle\mathbf{r}_{i},x-\bar{x}\rangle|)\mathbf{r}_{i}^{\otimes k}\right\|~,
\end{align*}
where the last inequality is due to the reverse triangle inequality, and the linearity of the dot product. Recall that the operator norm of a tensor is defined as
$\|T\|=\underset{\|z\|=1}{\max}\left|\underset{i_{1},..,i_{k}}{\sum}T_{i_{1},...,i_{k}}z_{i_{1}}...z_{i_{k}}\right|$, so

\begin{gather*}
\left\|\overset{\widetilde{T}-1}{\underset{i=1}{\sum}}(|\langle\mathbf{r}_{i},x-\bar{x}\rangle|)\mathbf{r}_{i}^{\otimes k}\right\|
=\underset{\|z\|=1}{\max}\left|\underset{i_{1},..,i_{k}}{\sum}\overset{\widetilde{T}-1}{\underset{i=1}{\sum}}|\langle\mathbf{r}_{i},x-\bar{x}\rangle|r_{i,i_{1}}...r_{i,i_{k}}z_{i_{1}}...z_{i_{k}}\right|\\
=\underset{\|z\|=1}{\max}\left|\overset{\widetilde{T}-1}{\underset{i=1}{\sum}}|\langle\mathbf{r}_{i},x-\bar{x}\rangle|\underset{i_{1}}{\sum}r_{i,i_{1}}z_{i_{1}}\cdots\underset{k}{\sum}r_{i,i_{k}}z_{i_{k}}\right|
=\underset{\|z\|=1}{\max}\left|\overset{\widetilde{T}-1}{\underset{i=1}{\sum}}|\langle\mathbf{r}_{i},x-\bar{x}\rangle|\langle\mathbf{r}_{i},z\rangle^{k}\right|\\
\leq2^{\frac{k}{2}}\underset{\|z\|=1}{\max}\overset{\widetilde{T}-1}{\underset{i=1}{\sum}}|\langle\mathbf{r}_{i},x-\bar{x}\rangle|\cdot\left|\langle\frac{\mathbf{r}_{i}}{\sqrt{2}},z\rangle\right|^{k}
=2^{\frac{k+1}{2}}\underset{\|z\|=1}{\max}\overset{\widetilde{T}-1}{\underset{i=1}{\sum}}\left|\langle\frac{\mathbf{r}_{i}}{\sqrt{2}},x-\bar{x}\rangle\right|\cdot\left|\langle\frac{\mathbf{r}_{i}}{\sqrt{2}},z\rangle\right|^{k}\\
\leq2^{\frac{k+1}{2}}\|x-\bar{x}\|\underset{\|z\|=1}{\max}\overset{\widetilde{T}-1}{\underset{i=1}{\sum}}\left|\langle\frac{\mathbf{r}_{i}}{\sqrt{2}},z\rangle\right|^{2}
\leq2^{\frac{k-1}{2}}\|x-\bar{x}\|\underset{\|z\|=1}{\max}\overset{\widetilde{T}-1}{\underset{i=1}{\sum}}(z_{i}-z_{i+1})^{2}\\
\leq2^{\frac{k-1}{2}}\|x-\bar{x}\|\underset{\|z\|=1}{\max}\left(2\cdot\overset{\widetilde{T}-1}{\underset{i=1}{\sum}}z_{i}^{2}+2\cdot\overset{\widetilde{T}}{\underset{i=2}{\sum}}z_{i}^{2}\right)\leq2^{\frac{k+3}{2}}\|x-\bar{x}\|~.\\
\end{gather*}
So overall we got
$$
\|\nabla^{(k)}f(x)-\nabla^{(k)}f(\bar{x})\|\leq\frac{\mu_{k}}{2^{\frac{k+3}{2}}}2^{\frac{k+3}{2}}\|x-\bar{x}\|=\mu_{k}\|x-\bar{x}\|~.
$$
\end{proof}

Now all that remains is to pick $\gamma$ in order to finish the proof of the Theorem. Our goal is to maximize the oracle complexity lower bound \eqref{T bound} under the constraints stated in \eqref{gamma bound}, \eqref{epsilon bound} and $\|x^*\|\leq D$. Note that in order to satisfy the latter, it is enough to bound the right hand side of \eqref{D bound} by $D^2$. An elementary computation shows one should pick $\gamma=\left(\frac{2^{\frac{3k+9}{2}}\left(k!\right)^{3}\lambda^{3}D^{4}}{9\mu_{k}^{3}}\right)^{\frac{k}{3k+1}}$, which results in the main theorem.

\section{Proof of Theorem \ref{upper bound main result}}
Our proof is constructive - we will describe such an algorithm. Our algorithm is based on a certain combination of accelerated Taylor descent (ATD) \cite{bubeck2019near} with cubic regularized Newton (CRN) \cite{nesterov2008accelerating},
while analyzing them under the highly smooth strongly convex regime.
More specifically, we suggest an algorithm which is composed of several different phases. We build the theoretical background during subsections \ref{upper proof 1}, \ref{upper proof 2}, and combine them into \ref{upper proof 3}.

\subsection{Highly smooth strongly convex ATD} \label{upper proof 1}
Assume $f$ is as stated in the Theorem. We start by proving a basic property of high order smoothness.

\begin{lemma} \label{high order smoothness lemma}
For all $x\in\mathbb{R}^d$ such that $\|x-x^*\|\leq D$:
\begin{equation*}
f(x)-f(x^{*})\leq\frac{M}{4}\cdot D^{k-1}+\frac{\mu_{k}}{2}\cdot D^{k+1}~.
\end{equation*}
\end{lemma}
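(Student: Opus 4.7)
The plan is a direct application of Taylor's theorem centered at $x^*$. Since $f$ is $\lambda$-strongly convex with unique minimum $x^*$, we have $\nabla f(x^*) = 0$, so Taylor expansion to order $k$ gives
\begin{equation*}
f(x) - f(x^*) \;=\; \sum_{j=2}^{k} \frac{1}{j!}\, \nabla^{j} f(x^*)[x-x^*]^{\otimes j} \;+\; R(x),
\end{equation*}
where $R(x)$ is the integral remainder of order $k$. The first-order term drops out because of optimality of $x^*$, which is the only structural fact about $f$ that I would use beyond high-order smoothness.

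I would then bound the two parts separately. For the polynomial part, the definition of $M$ together with the standard operator-norm inequality for symmetric $j$-tensors gives $|\nabla^{j} f(x^*)[x-x^*]^{\otimes j}| \le M\, \|x-x^*\|^{j} \le M D^{j}$ for each $j \in \{2,\dots,k\}$, so the whole sum is at most $M\sum_{j=2}^{k} D^{j}/j!$. Using the standing assumption $D>2$ from Assumption \ref{upper bound assumption}, I would collapse this finite sum into a single term of the form $\tfrac{M}{4}\, D^{k-1}$ by elementary comparisons between consecutive powers of $D$ (each term is dominated by the next up to a constant, and $D>2$ controls the tail). For the remainder, $\mu_k$-Lipschitzness of $\nabla^{k} f$ on the segment $[x^*,x]$ yields the usual estimate $|R(x)| \le \tfrac{\mu_{k}}{(k+1)!}\,\|x-x^*\|^{k+1} \le \tfrac{\mu_{k}}{2} D^{k+1}$, where the final inequality absorbs the $(k+1)!$ into the stated constant $\tfrac{1}{2}$.

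Adding the two bounds produces the claimed inequality. The main obstacle, if any, is purely bookkeeping of constants in the polynomial sum: one has to check that $M\sum_{j=2}^{k} D^{j}/j!$ can indeed be repackaged as $\tfrac{M}{4} D^{k-1}$ under Assumption \ref{upper bound assumption}, but this is elementary. All the conceptual content is carried by the vanishing of $\nabla f(x^*)$, the definition of $M$ as a uniform bound on $\|\nabla^{2} f(x^*)\|,\dots,\|\nabla^{k} f(x^*)\|$, and the $\mu_k$-Lipschitzness of $\nabla^{k} f$ used to control the Taylor remainder.
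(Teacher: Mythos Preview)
Your overall strategy---expand $f$ at $x^{*}$ via Taylor's theorem to order $k$, bound the polynomial part by $M$ and the integral remainder by $\mu_{k}$---is sound and is in fact more direct than the paper's argument, which instead iterates the mean-value form of Taylor's theorem: it first writes $f(x)-f(x^{*})\le \tfrac{1}{2}\|\nabla^{2}f(x^{(2)})\|\,D^{2}$, then bounds $\|\nabla^{2}f(x^{(2)})\|$ via $\|\nabla^{3}f(x^{(3)})\|$, and so on up to order $k$.

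However, your ``bookkeeping'' step is not a mere constant check; it actually fails. The sum $M\sum_{j=2}^{k} D^{j}/j!$ contains the top term $M D^{k}/k!$, and for $D>2$ this alone already exceeds $\tfrac{M}{4}D^{k-1}$ whenever $k$ is small (e.g.\ $k=2$ gives $\tfrac{M}{2}D^{2}$ versus $\tfrac{M}{4}D$). No rearrangement can fix this because the exponent is one too high. Indeed the inequality as stated is false: for $f(x)=\tfrac{L}{2}\|x\|^{2}$ with $k=2$ one has $M=L$, $\mu_{2}=0$, and $f(x)-f(x^{*})=\tfrac{L}{2}D^{2}\not\le \tfrac{L}{4}D$. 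The paper's proof reaches the printed constant only because its $M$-sum runs over $j=2,\dots,k-1$ (silently dropping the $\|\nabla^{k}f(x^{*})\|$ contribution) and then makes an arithmetic slip in the last line. Both your route and the paper's legitimately yield a bound of the form $c_{1}MD^{k}+c_{2}\mu_{k}D^{k+1}$; since this quantity is only used inside a logarithm downstream (cf.\ \eqref{ATD phase T equation}), the discrepancy is harmless there---but you should not claim that $\tfrac{M}{4}D^{k-1}$ is achievable from your decomposition.
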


\begin{proof}
The proof is based on recursively applying Taylor's theorem.
\begin{gather*}
f(x)-f(x^{*})=\nabla f(x^{*})\cdot(x-x^{*})+\frac{1}{2}\nabla^{2}f(x^{(2)})\cdot\left(x-x^{*}\right)^{\otimes2}\\
\leq\frac{1}{2}\|\nabla^{2}f(x^{(2)})\|\cdot\|x-x^{*}\|^{2}~,
\end{gather*}
where $x^{(2)}$ lies on the segment between $x^{*},x$. Furthermore
\begin{gather*}
\|\nabla^{2}f(x^{(2)})-\nabla^{2}f(x^{*})\|\leq\left\|\nabla^{3}f(x^{(3)})\cdot\left(x^{(2)}-x^{*}\right)\right\| \\
\leq\|\nabla^{3}f(x^{(3)})\|\cdot\|x^{(2)}-x^{*}\|\leq\|\nabla^{3}f(x^{(3)})\|\cdot\|x-x^{*}\|~.
\end{gather*}
Similarly for all $2\leq j<k$:
\begin{gather*}
\|\nabla^{j}f(x^{(j)})-\nabla^{j}f(x^{*})\|\leq\left\|\nabla^{j+1}f(x^{(j+1)})\cdot\left(x^{(j)}-x^{*}\right)\right\| \\
\leq\|\nabla^{j+1}f(x^{(j+1)})\|\cdot\|x-x^{*}\|~,
\end{gather*}
with each $x^{(j+1)}$ lying on the line segment between $x^{(j)},x^{*}$. Eventually,
\begin{equation*}
\|\nabla^{k}f(x^{(k)})-\nabla^{k}f(x^{*})\|\leq\mu_{k}\|x^{(k)}-x^{*}\|\leq\mu_{k}\|x-x^{*}\|~.
\end{equation*}
Recursively combining the triangle inequality with the previous inequalities gives
\begin{gather*}
\|\nabla^{2}f(x^{(2)})\|\leq\|\nabla^{2}f(x^{*})\|+\|\nabla^{2}f(x^{(2)})-\nabla^{2}f(x^{*})\| \\
\leq\|\nabla^{2}f(x^{*})\|+\|\nabla^{3}f(x^{(3)})\|\cdot\|x-x^{*}\| \\
\leq\|\nabla^{2}f(x^{*})\|+\left(\|\nabla^{3}f(x^{*})\|+\|\nabla^{3}f(x^{(3)})-\nabla^{3}f(x^{*})\|\right)\cdot\|x-x^{*}\| \\
\ldots\leq\sum_{j=2}^{k-1}\|\nabla^{j}f(x^{*})\|\cdot\|x-x^{*}\|^{j-2}+\|\nabla^{k}f(x^{(k)})-\nabla^{k}f(x^{*})\|\cdot\|x-x^{*}\|^{k-2} \\
\leq\sum_{j=2}^{k-1}\|\nabla^{j}f(x^{*})\|\cdot\|x-x^{*}\|^{j-2}+\mu_{k}\cdot\|x-x^{*}\|^{k-1} \\
\leq M\cdot\sum_{j=2}^{k-1} D^{j-2}+\mu_{k}\cdot D^{k-1}~.
\end{gather*}
So overall,
\begin{gather*}
f(x)-f(x^{*})\leq\frac{1}{2}\left(M\cdot\sum_{j=2}^{k-1} D^{j-2}+\mu_{k}\cdot D^{k-1}\right)\cdot D^{2} \\
=\frac{1}{2}\left(M\cdot\sum_{j=2}^{k-1} D^{j}+\mu_{k}\cdot D^{k+1}\right)
\leq\frac{M}{2}\cdot\sum_{j=2}^{k-1}D^{j}+\frac{\mu_{k}}{2}\cdot D^{k+1} \\
=\frac{M}{2}\cdot\frac{D^{k}-D^{2}}{D-1}+\frac{\mu_{k}}{2}\cdot D^{k+1}
\overset{D\geq2}{\leq}\frac{M}{2}\cdot\frac{D^{k}-D^{2}}{D/2}+\frac{\mu_{k}}{2}\cdot D^{k+1} \\
=\frac{M}{4}\cdot D^{k-1}+\frac{\mu_{k}}{2}\cdot D^{k+1}~.
\end{gather*}
\end{proof}
We are now ready to describe how to utilize ATD in the presence of high smoothness and strong convexity. ATD satisfies
\begin{equation*}
f(x^{t})-f(x^{*})\leq\frac{c_{k}\mu_{k}\|x^{1}-x^{*}\|^{k+1}}{t^{\frac{3k+1}{2}}}~.
\end{equation*}
Strong convexity implies
\begin{equation*}
\forall x:\ \frac{\lambda}{2}\|x-x^{*}\|^{2}\leq f(x)-f(x^{*})~,
\end{equation*}
and by assumption $\|x^{1}-x^{*}\|\leq D$ so we can get
\begin{equation*}
 f(x^{t})-f(x^{*})\leq\frac{c_{k}\mu_{k}\|x^{1}-x^{*}\|^{k+1}}{t^{\frac{3k+1}{2}}}\leq\frac{2c_{k}\mu_{k}D^{k-1}}{\lambda t^{\frac{3k+1}{2}}}\left(f(x^{1})-f(x^{*})\right)~.
\end{equation*}
If we set $\tau$ such that
\begin{equation*}
\frac{2c_{k}\mu_{k}D^{k-1}}{\lambda\tau^{\frac{3k+1}{2}}}=\frac{1}{2}\iff\tau=\left(\frac{4c_{k}\mu_{k}D^{k-1}}{\lambda}\right)^{\frac{2}{3k+1}}~,
\end{equation*}
we get after $\tau$ iterations 
\begin{equation*}
f(x_{\tau})-f(x^{*})\leq\frac{1}{2}\left(f(x_{1})-f(x^{*})\right)~.
\end{equation*}
Now notice that although accelerated schemes are not monotone in general, we have not gotten further away than our original starting point. In order to see this notice that our choice of $\tau$ satisfies
\begin{equation*}
    \tau\geq\left(\frac{2c_{k}\mu_{k}D^{k-1}}{\lambda}\right)^{\frac{2}{3k+1}}
    \implies
    \frac{c_k\mu_k D^{k+1}}{\tau^{\frac{3k+1}{2}}}\leq\frac{\lambda D^2}{2}~.
\end{equation*}
Thus we can utilize strong convexity and ATD's sub-optimality guarantee and get
\begin{equation*}
    \|x^{\tau}-x^*\| 
    \leq \sqrt{\frac{2}{\lambda}\left(f(x^{\tau})-f(x^*)\right)}
    \leq \sqrt{\frac{2}{\lambda}\left(\frac{c_{k}\mu_{k}\|x^{1}-x^{*}\|^{k+1}}{{\tau}^{\frac{3k+1}{2}}}\right)}
    \leq \sqrt{\frac{2}{\lambda}\cdot\frac{\lambda D^2}{2}}=D~.
\end{equation*}
Using this crucial observation, we can now initialize ATD with $x^{1}=x^{\tau}$. Iterating this process eventually gives
\begin{equation*}
f(x^{T})-f(x^{*})\leq\frac{f(x^{1})-f(x^{*})}{2^{T/\tau}}~.
\end{equation*}
In order to make the right hand side smaller than some $\epsilon_0$, a simple rearrangement reveals it is enough to set
\begin{equation*}
T\geq\left(\frac{4c_{k}\mu_{k}D^{k-1}}{\lambda}\right)^{\frac{2}{3k+1}}\cdot\log_{2}\left(\frac{f(x^{1})-f(x^{*})}{\epsilon}\right)~.
\end{equation*}
Using Lemma \ref{high order smoothness lemma}, we conclude that it is enough to set
\begin{equation} \label{ATD phase T equation}
T=\left(\frac{4c_{k}\mu_{k}D^{k-1}}{\lambda}\right)^{\frac{2}{3k+1}}\cdot\log_{2}\left(\frac{MD^{k-1}+2\mu_{k}\cdot D^{k+1}}{4\epsilon}\right)~.
\end{equation}

\subsection{Cubic regularized Newton} \label{upper proof 2}
If we were to assume that $\nabla^{2}f$ is $\mu_2$-Lipschitz, then provided that we start inside the region $$\left\{ f(x)-f(x^{*})\leq\frac{\lambda^{3}}{2\mu_{2}^{2}}\right\} $$ CRN guarantees $\epsilon$ sub-optimality after $ \mathcal{O}\left(\log\log\left(\frac{\lambda^{3}}{\mu_{2}^{2}\epsilon}\right)\right)$ steps (under our assumption on $\epsilon$). Naively, we would like to substitute $\epsilon=\frac{\lambda^{3}}{2\mu_{2}^{2}}$ into the iteration bound from the previous subsection and introduce a second phase in which the algorithm follows CRN. The obstacle is that we do not want to assume that $\mu_{2}$ exists globally. Thus we define
\begin{equation*}
\widetilde{\mu}_2(r)
:=
\sup\{L>0 \mid  \forall x,y\in B(x^*,r): \|\nabla^2f(x)-\nabla^2f(y)\|\leq L\|x-y\|
\}~.
\end{equation*}
That is, a $\mu_2$ proxy in a ball of radius $r$ around the optimum. Notice that
\begin{equation*}
\widetilde{\mu}_2(r)
\leq
\sup\left\{\|\nabla^{3}f(x)\| \mid  x\in\overline{B(x^*,r)}\right\}~.
\end{equation*}
The right hand side can be bounded exactly the same way as in the proof of Lemma \ref{high order smoothness lemma}, resulting in
\begin{equation*}
M\cdot\sum_{j=3}^{k-1}r^{j-3}+\mu_{k}\cdot r^{k-2}~,
\end{equation*}
which if $r\neq1$ equals
\begin{equation*}
M\cdot\frac{r^{k-3}-1}{r-1}+\mu_{k}\cdot r^{k-2}~.
\end{equation*}
Overall, for all $r\neq1:$
\begin{equation} \label{mu2 r equation}
\widetilde{\mu}_2(r)\leq
M\cdot\frac{r^{k-3}-1}{r-1}+\mu_{k}\cdot r^{k-2}~.
\end{equation}
In particular, this results in the following lemma:
\begin{lemma} \label{r insures mu2 bound lemma}
If $r<\min\left\{1,\left(\frac{M}{\mu_k}\right)^{\frac{1}{k-2}}\right\}$ then $\widetilde{\mu}_2(r)\leq2M$.
\end{lemma}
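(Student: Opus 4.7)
The plan is to plug the two hypotheses on $r$ directly into the pointwise bound established in equation \eqref{mu2 r equation} and estimate each summand by $M$. Concretely, the bound to work with is $\widetilde{\mu}_2(r)\leq M\cdot\frac{r^{k-3}-1}{r-1}+\mu_k r^{k-2}$, and it suffices to show that each of the two terms on the right-hand side is at most $M$ under the stated hypotheses on $r$.

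First I would handle the second summand $\mu_k r^{k-2}$, which is immediate: raising $r<(M/\mu_k)^{1/(k-2)}$ to the $(k-2)$-th power gives $r^{k-2}<M/\mu_k$, so $\mu_k r^{k-2}<M$. This step is purely mechanical.

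Next I would handle the first summand $M\cdot\frac{r^{k-3}-1}{r-1}$ by rewriting the ratio as a finite geometric series $1+r+r^2+\cdots+r^{k-4}$ (interpreted as $0$ in the degenerate case $k=3$). Invoking $r<1$, each term is at most one, so the whole summand is at most $M$. Summing the two estimates yields $\widetilde{\mu}_2(r)\leq 2M$, which is the claim.

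The only subtle point I anticipate is the handling of the geometric tail for large $k$: a naive term-by-term estimate produces $k-3$ summands rather than a single one. The cleanest way to reconcile this with the stated constant $2M$ is either to absorb any $k$-dependent factor into the overall constant $C_k$ of Theorem \ref{upper bound main result}, which is the spirit of the surrounding analysis where all constants depending only on $k$ are hidden, or equivalently to sharpen the geometric bound using the fact that in the regime of interest $r$ is appreciably smaller than $1$, so the geometric tail is controlled by a constant independent of $k$. Aside from this bookkeeping, the lemma is a routine substitution into \eqref{mu2 r equation} that prepares the ground for invoking CRN with an effective Hessian-Lipschitz constant of order $M$ in a small neighborhood of $x^{*}$.
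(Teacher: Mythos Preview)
Your approach matches the paper's exactly: plug $r$ into \eqref{mu2 r equation}, bound $\mu_k r^{k-2}<M$ directly from $r<(M/\mu_k)^{1/(k-2)}$, and control the first summand using $r<1$. The paper in fact simply asserts $\frac{r^{k-3}-1}{r-1}\leq 1$ and adds the two pieces to reach $2M$; as you correctly flag, this is only literally true for $k\in\{3,4\}$, since for general $k$ the geometric sum $1+r+\cdots+r^{k-4}$ is bounded only by $k-3$. So your caution is well placed, and your proposed remedy---absorb the extra $k$-dependent factor into the constant $C_k$ of Theorem~\ref{upper bound main result}---is the right fix and is harmless for the main result; in this respect your write-up is a touch more careful than the paper's own argument.
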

\begin{proof}
For such $r$ it holds that $\frac{r^{k-3}-1}{r-1}\leq1$ and also $\mu_{k}\cdot r^{k-2}<M$. Thus
\begin{equation*}
M\cdot\frac{r^{k-3}-1}{r-1}+\mu_{k}\cdot r^{k-2}\leq2M~,
\end{equation*}
which by \eqref{mu2 r equation} finishes the proof.
\end{proof}

\subsection{Combining the ingredients} \label{upper proof 3}
Our suggested algorithm starts by performing ATD for $t_1$ steps, where $t_1$ is the minimal index for which $\|x^{t_1}-x^*\|< \min\left\{1,\left(\frac{M}{\mu_k}\right)^{\frac{1}{k-2}}\right\}$. We will now calculate $t_1$. Using strong convexity, we know that it is enough to ensure
\begin{gather*}
\sqrt{\frac{2}{\lambda}\left(f(x^{t_{1}})-f(x^{*})\right)}\leq \min\left\{1,\left(\frac{M}{\mu_k}\right)^{\frac{1}{k-2}}\right\} \\
\iff
f(x^{t_{1}})-f(x^{*})<\min\left\{ \frac{\lambda}{2},\frac{\lambda}{2}\left(\frac{M}{\mu_{k}}\right)^{\frac{2}{k-2}}\right\}~.
\end{gather*}
Equation \eqref{ATD phase T equation} allows us to convert the latter into a precise statement about $t_1$. A simple calculation turns out to give
\begin{equation} \label{t_1 value equation}
    t_1=\left(\frac{4c_{k}\mu_{k}D^{k-1}}{\lambda}\right)^{\frac{2}{3k+1}}\left(\log_{2}\left(\frac{\left(MD^{k-1}+2\mu_{k}\cdot D^{k+1}\right)^{2}}{8\lambda}\cdot\left(\frac{\mu_{k}}{M}\right)^{\frac{2}{k-2}}\right)\right)~.
\end{equation}
Afterwards, we perform $t_2$ more steps of ATD, where $t_2$ is the minimal index for which
\begin{equation*}
f(x^{t_{2}})-f(x^{*})\leq\frac{\lambda^{3}}{32M^{2}}~.
\end{equation*}
Once again we use equation \eqref{ATD phase T equation}, in order to deduce
\begin{equation} \label{t_2 value equation}
    t_2=\left(\frac{4c_{k}\mu_{k}D^{k-1}}{\lambda}\right)^{\frac{2}{3k+1}}\cdot\log_{2}\left(\frac{8M^{3}D^{k-1}+16M^{2}\mu_{k}\cdot D^{k+1}}{\lambda^{3}}\right)~.
\end{equation}
At this point, the algorithm starts performing cubic regularized Newton steps. Notice that the whole point of waiting the first $t_1$ steps was to ensure Lemma \eqref{r insures mu2 bound lemma} is applicable. Furthermore, the additional $t_2$ steps ensure we are in the region of quadratic convergence for CRN, with $\mu_2=\Theta(M)$. Thus, from this point onward the number of steps required to ensure $\epsilon$ sub-optimality is
\begin{equation} \label{CRN phase convergence}
    \mathcal{O}\left(\log\log\left(\frac{\lambda^{3}}{M^{2}\epsilon}\right)\right)~.
\end{equation}
Summing \eqref{t_1 value equation}, \eqref{t_2 value equation}, \eqref{CRN phase convergence} and slightly rearranging finishes the proof.

\bibliographystyle{plainnat}
\bibliography{high_order_v2.bib}

\begin{thebibliography}{16}
\providecommand{\natexlab}[1]{#1}
\providecommand{\url}[1]{\texttt{#1}}
\expandafter\ifx\csname urlstyle\endcsname\relax
  \providecommand{\doi}[1]{doi: #1}\else
  \providecommand{\doi}{doi: \begingroup \urlstyle{rm}\Url}\fi

\bibitem[Agarwal and Hazan(2018)]{agarwal2018lower}
Naman Agarwal and Elad Hazan.
\newblock Lower bounds for higher-order convex optimization.
\newblock In \emph{Conference On Learning Theory}, pages 774--792. PMLR, 2018.

\bibitem[Arjevani et~al.(2019)Arjevani, Shamir, and Shiff]{arjevani2019oracle}
Yossi Arjevani, Ohad Shamir, and Ron Shiff.
\newblock Oracle complexity of second-order methods for smooth convex
  optimization.
\newblock \emph{Mathematical Programming}, 178\penalty0 (1-2):\penalty0
  327--360, 2019.

\bibitem[Baes(2009)]{baes2009estimate}
Michel Baes.
\newblock Estimate sequence methods: extensions and approximations.
\newblock \emph{Institute for Operations Research, ETH, Z{\"u}rich,
  Switzerland}, 2009.

\bibitem[Bubeck(2014)]{bubeck2014convex}
S{\'e}bastien Bubeck.
\newblock Convex optimization: Algorithms and complexity.
\newblock \emph{arXiv preprint arXiv:1405.4980}, 2014.

\bibitem[Bubeck et~al.(2019)Bubeck, Jiang, Lee, Li, and
  Sidford]{bubeck2019near}
S{\'e}bastien Bubeck, Qijia Jiang, Yin~Tat Lee, Yuanzhi Li, and Aaron Sidford.
\newblock Near-optimal method for highly smooth convex optimization.
\newblock In \emph{Conference on Learning Theory}, pages 492--507. PMLR, 2019.

\bibitem[Gasnikov et~al.(2019)Gasnikov, Dvurechensky, Gorbunov, Vorontsova,
  Selikhanovych, Uribe, Jiang, Wang, Zhang, Bubeck, et~al.]{gasnikov2019near}
Alexander Gasnikov, Pavel Dvurechensky, Eduard Gorbunov, Evgeniya Vorontsova,
  Daniil Selikhanovych, C{\'e}sar~A Uribe, Bo~Jiang, Haoyue Wang, Shuzhong
  Zhang, S{\'e}bastien Bubeck, et~al.
\newblock Near optimal methods for minimizing convex functions with lipschitz $
  p $-th derivatives.
\newblock In \emph{Conference on Learning Theory}, pages 1392--1393, 2019.

\bibitem[Jiang et~al.(2019)Jiang, Wang, and Zhang]{jiang2019optimal}
Bo~Jiang, Haoyue Wang, and Shuzhong Zhang.
\newblock An optimal high-order tensor method for convex optimization.
\newblock In \emph{Conference on Learning Theory}, pages 1799--1801, 2019.

\bibitem[Kamzolov and Gasnikov(2020)]{kamzolov2020near}
Dmitry Kamzolov and Alexander Gasnikov.
\newblock Near-optimal hyperfast second-order method for convex optimization
  and its sliding.
\newblock \emph{arXiv preprint arXiv:2002.09050}, 2020.

\bibitem[Nemirovski(2005)]{nemirovski2005efficient}
Arkadi Nemirovski.
\newblock Efficient methods in convex programming -- lecture notes, 2005.

\bibitem[Nemirovsky and Yudin(1983)]{YudNem83}
Arkadi Nemirovsky and David Yudin.
\newblock \emph{Problem Complexity and Method Efficiency in Optimization}.
\newblock Wiley-Interscience, 1983.

\bibitem[Nesterov(2008)]{nesterov2008accelerating}
Yurii Nesterov.
\newblock Accelerating the cubic regularization of newton's method on convex
  problems.
\newblock \emph{Mathematical Programming}, 112\penalty0 (1):\penalty0 159--181,
  2008.

\bibitem[Nesterov(2018)]{nesterov2018lectures}
Yurii Nesterov.
\newblock \emph{Lectures on convex optimization}, volume 137.
\newblock Springer, 2018.

\bibitem[Nesterov(2020{\natexlab{a}})]{nesterov2020inexact}
Yurii Nesterov.
\newblock Inexact accelerated high-order proximal-point methods.
\newblock Technical report, Technical report, Technical Report CORE Discussion
  paper 2020, Universit{\'e}~…, 2020{\natexlab{a}}.

\bibitem[Nesterov(2020{\natexlab{b}})]{nesterov2020superfast}
Yurii Nesterov.
\newblock Superfast second-order methods for unconstrained convex optimization.
\newblock \emph{CORE DP}, 7:\penalty0 2020, 2020{\natexlab{b}}.

\bibitem[Nesterov and Polyak(2006)]{nesterov2006cubic}
Yurii Nesterov and Boris~T Polyak.
\newblock Cubic regularization of newton method and its global performance.
\newblock \emph{Mathematical Programming}, 108\penalty0 (1):\penalty0 177--205,
  2006.

\bibitem[Woodworth and Srebro(2017)]{woodworth2017lower}
Blake Woodworth and Nathan Srebro.
\newblock Lower bound for randomized first order convex optimization.
\newblock \emph{arXiv preprint arXiv:1709.03594}, 2017.

\end{thebibliography}

\appendix

\section{Proof of Theorem \ref{thm: low dim lower bound}} \label{Appendix: proof of low dim lower bound}
The proof follows the same path as Section \ref{sec: proof of main lower bound}. To that end, consider the function
\begin{equation*}
    \widetilde{f}_{\gamma}(x_{1},...,x_{d})=
    \frac{1}{k+1} 
    \sum_{i=1}^{d-1}
    \left|x_{i}-x_{i+1}\right|^{k+1}-\gamma x_{1}+
    \frac{\widetilde{\lambda}}{2}\|x\|^{2}
\end{equation*}
where $\gamma>0,\ \widetilde{\lambda}=\frac{k!2^{\frac{k+3}{2}}\lambda}{\mu_k}$. Since $\widetilde{f}_\gamma$ is $\widetilde{\lambda}$-strongly convex, $\widetilde{x}^{*}:=\underset{x\in\mathbb{R}^{d}}{\arg\min}\widetilde{f}_\gamma(x)$ exists and is unique. We start by stating properties of $\widetilde{x}^*$ which are analogous to those proved in subsection \ref{subsec: lower proof 1}. The slight differences between the two settings are the result of substituting $\widetilde{T}=d$, and the lack of the assumptions on $d,\gamma$ which are made in the beginning of subsection \ref{subsec: lower proof 1}.
\begin{lemma} \label{lemma: low dim 4 part minimizer lemma}
\begin{enumerate}
    \item $\forall t \in [d]: \widetilde{x}^{*}_{t}\geq 0$~.
    \item $\widetilde{x}^{*}_{1}\geq\widetilde{x}^{*}_{2}\geq \dots \widetilde{x}^{*}_{d}$~.
    \item $\forall t\in[d-1]: \widetilde{x}_{t+1}^{*}=\widetilde{x}_{t}^{*}-(\gamma-\widetilde{\lambda}{\sum}_{j=1}^{t}\widetilde{x}_{j}^{*})^{1/k}$~.
    \item ${\sum}_{t=1}^{d}\widetilde{x}_{t}^{*}=\frac{\gamma}{\widetilde{\lambda}}$~.
\end{enumerate}
\end{lemma}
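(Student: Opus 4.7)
The plan is to mirror, essentially verbatim, the proof of Lemma~\ref{lemma: 4 part minimizer lemma} from subsection~\ref{subsec: lower proof 1}, replacing $\widetilde{T}$ by $d$ throughout. Looking back at that earlier proof, the auxiliary assumptions $d > \widetilde{T} \geq 4\gamma/\widetilde{\lambda}^{k/(k-1)}$ and the lower bounds on $\gamma$ were never actually invoked to establish any of the four claims---they enter only in the subsequent Lemma~\ref{lemma: lower bound on minimizer coordinate decay} and beyond. Hence the same chain of arguments applies here, with only cosmetic bookkeeping changes.

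Concretely, for part~1 I would first argue $\widetilde{x}^*_1 > 0$ by noting that $\widetilde{f}_\gamma(0) = 0$ while $\nabla \widetilde{f}_\gamma(0) = -\gamma e_1 \neq 0$, so $\widetilde{f}_\gamma(\widetilde{x}^*) < 0$; since the only term in $\widetilde{f}_\gamma$ that can be negative is $-\gamma x_1$, this forces $\widetilde{x}^*_1 > 0$. Then setting $\hat{x}_i = |\widetilde{x}^*_i|$ and applying the reverse triangle inequality $||a|-|b|| \leq |a-b|$ coordinatewise yields $\widetilde{f}_\gamma(\hat{x}) \leq \widetilde{f}_\gamma(\widetilde{x}^*)$; uniqueness of the minimizer then gives $\hat{x} = \widetilde{x}^*$, so all coordinates are non-negative. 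For part~2, I would argue by contradiction: if $i_0$ is the first index with $\widetilde{x}^*_{i_0} < \widetilde{x}^*_{i_0+1}$, set $\delta := \widetilde{x}^*_{i_0+1} - \widetilde{x}^*_{i_0}$ and define $\hat{x}$ by leaving the first $i_0$ coordinates unchanged and replacing the rest by $\max\{0, \widetilde{x}^*_i - \delta\}$; one checks that $\hat{x} \neq \widetilde{x}^*$ (since the $i_0$th and $(i_0+1)$st coordinates of $\hat{x}$ coincide), while each $|x_i - x_{i+1}|^{k+1}$ and the $\|\cdot\|^2$ term weakly decreases, contradicting uniqueness.

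For parts~3 and~4 I would differentiate $\widetilde{f}_\gamma$ coordinatewise and use parts~1 and~2 to drop absolute values, yielding the system
\[
(\widetilde{x}_1^* - \widetilde{x}_2^*)^k = \gamma - \widetilde{\lambda}\widetilde{x}_1^*, \quad (\widetilde{x}_t^* - \widetilde{x}_{t+1}^*)^k = (\widetilde{x}_{t-1}^* - \widetilde{x}_t^*)^k - \widetilde{\lambda}\widetilde{x}_t^*, \quad (\widetilde{x}_{d-1}^* - \widetilde{x}_d^*)^k = \widetilde{\lambda}\widetilde{x}_d^*.
\]
Telescoping the middle relations from index $2$ up to index $t$ yields $(\widetilde{x}_t^* - \widetilde{x}_{t+1}^*)^k = \gamma - \widetilde{\lambda}\sum_{j=1}^t \widetilde{x}_j^*$, and taking the $k$th root---legitimate since part~2 ensures the base is non-negative---gives part~3. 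Summing those middle relations all the way to $d-1$ and then substituting in the first and last equations of the system cancels the power terms and isolates $\sum_{t=1}^d \widetilde{x}_t^* = \gamma/\widetilde{\lambda}$, establishing part~4.

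Since the original proof in subsection~\ref{subsec: lower proof 1} already carries out precisely these arguments, there is no genuine obstacle; the only thing that needs to be verified is that none of the discarded structural assumptions were ever invoked inside the proof of Lemma~\ref{lemma: 4 part minimizer lemma} itself, which a direct inspection confirms. The lemma then follows as an immediate translation.
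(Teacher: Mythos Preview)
Your proposal is correct and matches the paper's own proof, which simply states that the argument is identical to that of \lemref{lemma: 4 part minimizer lemma} with $\widetilde{T}=d$. Your additional observation that the auxiliary assumptions on $\widetilde{T}$ and $\gamma$ from subsection~\ref{subsec: lower proof 1} are never invoked in that lemma's proof is exactly what justifies this transfer.
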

\begin{proof}
Identical to the proof of \lemref{lemma: 4 part minimizer lemma}, with $\widetilde{T}=d$.
\end{proof}
\begin{lemma} 
$\forall t\in[d]:\ \widetilde{x}_{t}^{*}\geq max\left\{0,\frac{1}{4}\cdot\frac{\gamma}{\widetilde{\lambda}+\sqrt{2\widetilde{\lambda}\gamma^{\frac{k-1}{k}}}}+\left(\frac{1}{2}-t\right)\gamma^{1/k}\right\}$~.
\end{lemma}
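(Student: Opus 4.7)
The plan is to mirror the proof of Lemma \ref{lemma: lower bound on minimizer coordinate decay} verbatim, substituting $\widetilde{T}=d$ throughout and replacing the use of \lemref{lemma: 4 part minimizer lemma} with \lemref{lemma: low dim 4 part minimizer lemma}. Observe that in the original argument, the assumptions $\widetilde{T}\geq 4\gamma/\widetilde{\lambda}^{k/(k-1)}$ and $\gamma\geq\widetilde{\lambda}^{k/(k-1)}$ were invoked only at the very end to simplify the ratio $\gamma/(\widetilde{\lambda}+\sqrt{2\widetilde{\lambda}\gamma^{1-1/k}})$ into the cleaner form $\gamma^{(k+1)/(2k)}/(12\sqrt{\widetilde{\lambda}})$. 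Since the target bound here retains that ratio, we can skip that simplification entirely.

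More concretely, I would first use \lemref{lemma: low dim 4 part minimizer lemma}.3 together with the non-negativity from \lemref{lemma: low dim 4 part minimizer lemma}.1 to get $\widetilde{x}_{t+1}^{*}\geq\widetilde{x}_{t}^{*}-\gamma^{1/k}$, hence inductively $\widetilde{x}_{t+1}^{*}\geq\widetilde{x}_{1}^{*}-t\gamma^{1/k}$. Plugging this into the identity $\sum_{t=1}^{d}\widetilde{x}_{t}^{*}=\gamma/\widetilde{\lambda}$ from \lemref{lemma: low dim 4 part minimizer lemma}.4 and summing the arithmetic-progression-like tail yields a quadratic inequality in $\widetilde{x}_{1}^{*}$, from which
\[
\widetilde{x}_{1}^{*}\leq\gamma^{1/k}+\sqrt{\tfrac{2\gamma^{1+1/k}}{\widetilde{\lambda}}}
\]
follows via the quadratic formula (exactly as in \eqref{originally 3}).

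Second, using \lemref{lemma: low dim 4 part minimizer lemma}.3 again, as long as $\sum_{j=1}^{t}\widetilde{x}_{j}^{*}\leq\frac{2^{k}-1}{2^{k}}\cdot\frac{\gamma}{\widetilde{\lambda}}$ we obtain the halving bound $\widetilde{x}_{t+1}^{*}\leq\widetilde{x}_{t}^{*}-\gamma^{1/k}/2$, which iterates to $\widetilde{x}_{t+1}^{*}\leq\widetilde{x}_{1}^{*}-t\gamma^{1/k}/2$. Let $t_{0}$ be the smallest index where $\sum_{j=1}^{t_{0}}\widetilde{x}_{j}^{*}>\frac{2^{k}-1}{2^{k}}\cdot\frac{\gamma}{\widetilde{\lambda}}$, which exists by \lemref{lemma: low dim 4 part minimizer lemma}.4. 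Combining monotonicity (\lemref{lemma: low dim 4 part minimizer lemma}.2) with the upper bound on $\widetilde{x}_{1}^{*}$ above gives a lower bound on $t_{0}$, and then applying the halving inequality at index $t_{0}-1$ together with $\widetilde{x}_{t_{0}}^{*}\geq 0$ transfers that into a lower bound
\[
\widetilde{x}_{1}^{*}\geq\frac{2^{k}-1}{2^{k+1}}\cdot\frac{\gamma}{\widetilde{\lambda}+\sqrt{2\widetilde{\lambda}\gamma^{(k-1)/k}}}-\frac{\gamma^{1/k}}{2}.
\]

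Finally, feeding this estimate back into the inductive lower bound $\widetilde{x}_{t}^{*}\geq\widetilde{x}_{1}^{*}-(t-1)\gamma^{1/k}$ from the first step and using the elementary fact $\frac{2^{k}-1}{2^{k+1}}\geq\frac{1}{4}$ for every $k\geq 2$ produces the claimed bound, with the $\max\{0,\cdot\}$ accounting for indices where the linear expression becomes negative. The only real obstacle is bookkeeping: verifying that every step goes through without either of the two assumptions used in subsection \ref{subsec: lower proof 1}. This is the case because each manipulation above relies solely on parts 1–4 of \lemref{lemma: low dim 4 part minimizer lemma}, the definition of $t_{0}$, and the basic inequality $\frac{2^{k}-1}{2^{k+1}}\geq\frac{1}{4}$, none of which require a lower bound on $\gamma$ or on $d$.
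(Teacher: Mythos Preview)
Your proposal is correct and follows essentially the same route as the paper's own proof, which simply states that the argument is ``identical to the proof of \lemref{lemma: lower bound on minimizer coordinate decay} up until the further assumption on $\gamma$, while noticing that $\frac{2^{k}-1}{2^{k+1}}\geq\frac{1}{4}$.'' Your write-up just spells out those steps explicitly, and your observation that the dropped assumptions were only used for the final cosmetic simplification is exactly the point.
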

\begin{proof}
Identical to the proof of \lemref{lemma: lower bound on minimizer coordinate decay} up until the further assumption on $\gamma$, while noticing that $\frac{2^k-1}{2^{k+1}}\geq\frac{1}{4}$.
\end{proof}
\begin{lemma}
$\sum_{i=1}^{d}(\widetilde{x}_{i}^{*})^{2}
\leq
\left(1+\sqrt{\frac{2\gamma^{\frac{k-1}{k}}}{\widetilde{\lambda}}}\right)\frac{\gamma^{\frac{k+1}{k}}}{\widetilde{\lambda}}$~.
\end{lemma}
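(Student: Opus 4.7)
The statement is the low-dimensional analog of \lemref{lemma: minimizer norm bound}, so the plan is to closely mirror that earlier proof, stopping just before the step where the additional hypothesis $\gamma \geq \widetilde{\lambda}^{k/(k-1)}$ was used to rewrite the constant $1$ as $\sqrt{\gamma^{(k-1)/k}/\widetilde{\lambda}}$. That extra hypothesis is unavailable here, which is precisely why the bound appears in the unsimplified form $\left(1+\sqrt{2\gamma^{(k-1)/k}/\widetilde{\lambda}}\right)\gamma^{(k+1)/k}/\widetilde{\lambda}$.

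First I would reduce the sum of squares to a product. By parts 1 and 2 of \lemref{lemma: low dim 4 part minimizer lemma}, the coordinates $\widetilde{x}_i^*$ are nonnegative and $\widetilde{x}_1^*$ is the largest among them, so
\begin{equation*}
\sum_{i=1}^{d}(\widetilde{x}_{i}^{*})^{2}
\leq \Big(\max_{i\in[d]}\widetilde{x}_{i}^{*}\Big)\sum_{i=1}^{d}\widetilde{x}_{i}^{*}
= \widetilde{x}_{1}^{*}\cdot\frac{\gamma}{\widetilde{\lambda}},
\end{equation*}
where the final equality uses part 4 of the same lemma.

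Next I would establish the analog of \eqref{originally 3}, namely $\widetilde{x}_1^* \leq \gamma^{1/k} + \sqrt{2\gamma^{(k+1)/k}/\widetilde{\lambda}}$. Parts 1 and 3 of \lemref{lemma: low dim 4 part minimizer lemma} give $\widetilde{x}_{t+1}^* \geq \widetilde{x}_t^* - \gamma^{1/k}$, and iterating yields $\widetilde{x}_{t+1}^* \geq \widetilde{x}_1^* - t\gamma^{1/k}$ for all $t$. Summing the nonnegative quantities $\max\{0,\widetilde{x}_1^* - (t-1)\gamma^{1/k}\}$ over $t\in[d]$ and invoking part 4 gives an arithmetic-progression lower bound on $\gamma/\widetilde{\lambda}$, which after the same elementary manipulations as in the derivation of \eqref{originally 3} produces the quadratic inequality $(\widetilde{x}_1^*)^2 - \gamma^{1/k}\widetilde{x}_1^* - 2\gamma^{1+1/k}/\widetilde{\lambda} \leq 0$. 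Solving with the quadratic formula and using $\sqrt{a+b}\leq \sqrt{a}+\sqrt{b}$ yields the desired bound on $\widetilde{x}_1^*$; crucially, none of these manipulations required $\gamma \geq \widetilde{\lambda}^{k/(k-1)}$.

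Combining the two displayed inequalities then gives
\begin{equation*}
\sum_{i=1}^{d}(\widetilde{x}_{i}^{*})^{2}
\leq \left(\gamma^{1/k} + \sqrt{\tfrac{2\gamma^{(k+1)/k}}{\widetilde{\lambda}}}\right)\cdot\frac{\gamma}{\widetilde{\lambda}}
= \left(1+\sqrt{\tfrac{2\gamma^{(k-1)/k}}{\widetilde{\lambda}}}\right)\cdot\frac{\gamma^{(k+1)/k}}{\widetilde{\lambda}},
\end{equation*}
which is the claim. There is no real obstacle: the argument is a direct transcription of the one already carried out for \lemref{lemma: minimizer norm bound}, with the only care being to keep the $1$ in the final expression rather than absorbing it into the square root.
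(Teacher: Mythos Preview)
Your proposal is correct and follows exactly the paper's approach: the paper's own proof simply reads ``Identical to the proof of \lemref{lemma: minimizer norm bound}, up until the further assumption on $\gamma$. Stopping there gives the result,'' and your write-up is a faithful elaboration of precisely that argument, including the correct observation that one must retain the $1$ rather than absorb it into the square-root term.
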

\begin{proof}
Identical to the proof of \lemref{lemma: minimizer norm bound}, up until the further assumption on $\gamma$. Stopping there gives the result.
\end{proof}
Let $v_1,\dots,v_d\in\mathbb{R}^d$ be orthogonal unit vectors, and define:
\begin{gather*}
f_{\gamma,v_{1},...,v_{d}}(x_{1},...,x_{d})=\frac{\mu_{k}}{k!2^{\frac{k+3}{2}}}\left(\overset{d-1}{\underset{i=1}{\sum}}g\left(\langle v_{i}-v_{i+1},x\rangle\right)-\gamma\langle v_{1},x\rangle\right)+\frac{\lambda}{2}\|x\|^{2}~,\\
g(x)=\frac{1}{k+1}|x|^{k+1}~.
\end{gather*}
Assuming $\gamma,v_{1},...,v_{d}$ are somehow fixed, we use the abbreviation $f$ and denote $x^{*}=\underset{x\in\mathbb{R}^{d}}{\arg\min}f(x)$. Note that \lemref{lemma: rotation min 1} and \lemref{lemma: rotation min 2} hold as is with $\widetilde{T}=d$. Combined with all the previous lemmas this results in the following proposition, analogous to Proposition \ref{minimizer proposition}.
\begin{proposition} \label{proposition: minimizer low dim}
Suppose that $\gamma>0$. Then $f$ has a unique minimizer $x^{*}$ which satisfies: 
\begin{enumerate}
    \item
    $\forall t\in[d]:\langle v_{t},x^{*}\rangle\geq max\left\{0,\frac{1}{4}\cdot\frac{\gamma}{\widetilde{\lambda}+\sqrt{2\widetilde{\lambda}\gamma^{\frac{k-1}{k}}}}+\left(\frac{1}{2}-t\right)\gamma^{1/k}\right\}$~.
    \item 
    $\|x^{*}\|^{2}\leq\left(1+\sqrt{\frac{2\gamma^{\frac{k-1}{k}}}{\widetilde{\lambda}}}\right)\frac{\gamma^{\frac{k+1}{k}}}{\widetilde{\lambda}}$~.
    \end{enumerate}
\end{proposition}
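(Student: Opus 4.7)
The plan is to observe that the proposition is a direct package of the three preceding lemmas (the low-dimensional analogs of \lemref{lemma: 4 part minimizer lemma}, \lemref{lemma: lower bound on minimizer coordinate decay}, and \lemref{lemma: minimizer norm bound}, which are already established in the appendix) together with the two ``rotation'' lemmas \lemref{lemma: rotation min 1} and \lemref{lemma: rotation min 2} from the main body. The key point is that \lemref{lemma: rotation min 1} and \lemref{lemma: rotation min 2} are proved using only the facts that $v_1,\ldots,v_{\widetilde{T}}$ are orthogonal unit vectors and that $f$ is obtained from $\widetilde{f}_\gamma$ via rotation and positive rescaling, plus adding $\frac{\lambda}{2}\|x\|^2$. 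None of this uses the specific assumptions on $\gamma$ or $\widetilde{T}$ from Subsection \ref{subsec: lower proof 1}, so both lemmas transfer verbatim to the present setting with $\widetilde{T}=d$.

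First, for existence and uniqueness of $x^*$: $f$ is a sum of convex functions and the $\lambda$-strongly convex term $\frac{\lambda}{2}\|x\|^2$, hence strongly convex with a unique minimizer. For Part 1, \lemref{lemma: rotation min 1} applied with $\widetilde{T}=d$ gives $\langle v_t,x^*\rangle = \widetilde{x}_t^*$ for every $t\in[d]$, so the desired coordinate bound is just the second preceding lemma (the analog of \lemref{lemma: lower bound on minimizer coordinate decay}) restated for $\widetilde{x}_t^*$. For Part 2, \lemref{lemma: rotation min 2} together with \lemref{lemma: rotation min 1} gives
\[
\|x^*\|^2 \;=\; \sum_{i=1}^{d}\langle v_i,x^*\rangle^2 \;=\; \sum_{i=1}^{d}(\widetilde{x}_i^*)^2,
\]
and the third preceding lemma (the analog of \lemref{lemma: minimizer norm bound} stopped before the ``$\gamma \geq \widetilde\lambda^{k/(k-1)}$'' simplification) provides exactly the claimed upper bound on this sum.

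I do not expect any real obstacle: the proposition is, by design, a direct consequence of the machinery developed earlier. The only thing to be slightly careful about is that the two lemmas we import from the main body do not inadvertently rely on the assumptions on $\gamma,\widetilde{T}$ listed at the start of Subsection \ref{subsec: lower proof 1}; a quick inspection of their proofs confirms they do not, so the transfer is clean.
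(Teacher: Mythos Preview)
Your proposal is correct and matches the paper's own argument essentially line for line: the paper states that \lemref{lemma: rotation min 1} and \lemref{lemma: rotation min 2} hold as is with $\widetilde{T}=d$, and that combining them with the three appendix lemmas yields the proposition. Your added remark that the rotation lemmas do not rely on the extra assumptions on $\gamma,\widetilde{T}$ from Subsection~\ref{subsec: lower proof 1} is a useful sanity check that the paper leaves implicit.
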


We continue with deriving an oracle complexity lower bound in the same way as in subsection \ref{subsec: lower oracle complexity}. Notice that \lemref{lemma: orthogonal information} holds as is, so we can perform the exact same orthogonality procedure of proposition \ref{proposition: T lower bound} for $T=d/10$ iterations (this will require a dimension which is smaller than $d$ in order to ensure the existence of the orthogonal directions). This results in
\begin{gather*}
    \|x^T-x^*\|^2
    = \sum_{i=1}^{d}\langle v_i,x^T-x^*\rangle^2
    \geq \langle v_T,x^T-x^*\rangle^2
    = \langle v_T,x^*\rangle^2
    \\
    \implies f\left(x^T\right)-f\left(x^*\right)
    \geq \frac{\lambda}{2}\|x^T-x^*\|^2
    \geq \frac{\lambda}{2}\langle v_T,x^*\rangle^2
    \\
    \geq
    \frac{\lambda}{2}\left[\frac{1}{4}\cdot\frac{\gamma}{\widetilde{\lambda}+\sqrt{2\widetilde{\lambda}\gamma^{\frac{k-1}{k}}}}+\left(\frac{1}{2}-\frac{d}{10}\right)\gamma^{1/k}\right]^2~.
\end{gather*}
Elementary rearrangements show that if 
\begin{equation} \label{epsilon,d bound low dim}
    \epsilon\leq\frac{\gamma^{2/k}\lambda}{32}~,\ \  d\leq\frac{10\gamma^{\frac{k-1}{k}}}{4\left(\widetilde{\lambda}+\sqrt{2\widetilde{\lambda}\gamma^{\frac{k-1}{k}}}\right)}+\frac{10}{4}~,
\end{equation}
then the former expression is larger than $\epsilon$ which leads to the desired $T\geq d/10=\Omega(d)$ lower bound. Since we assume $d\leq\left(\frac{D^{k-1}}{\widetilde{\lambda}}\right)^{\frac{2}{3k+1}}$, by \eqref{epsilon,d bound low dim} in order to finish the proof it suffices to set $\gamma$ such that 
\begin{equation*}
    \left(\frac{D^{k-1}}{\widetilde{\lambda}}\right)^{\frac{2}{3k+1}}
    \leq
    \frac{10\gamma^{\frac{k-1}{k}}}{4\left(\widetilde{\lambda}+\sqrt{2\widetilde{\lambda}\gamma^{\frac{k-1}{k}}}\right)}+\frac{10}{4}~.
\end{equation*}
By introducing a substitute variable $r:=\gamma^\frac{k-1}{2k}$, the inequality above reduces to a simple quadratic in $r$ which can be easily verified to have a negative discriminant. Consequently, every assignment of $\gamma$ satisfies the desired inequality. In particular, setting $\gamma=1$ gives rise to the conditions
\[
\epsilon\leq\frac{\lambda}{32},~
D\geq\sqrt{\frac{\mu_{k}}{k!2^{\frac{k+3}{2}}\lambda}+\sqrt{2}\left(\frac{\mu_{k}}{k!2^{\frac{k+3}{2}}\lambda}\right)^{3/2}}~,
\]
where the former follows from \eqref{epsilon,d bound low dim}, while the latter follows from the second bullet in Proposition \ref{proposition: minimizer low dim}. This completes the proof.

\end{document}